\newtheorem{theorem}{Theorem}[section]
\newtheorem{lemma}[theorem]{Lemma}
\newtheorem{proposition}[theorem]{Proposition}
\newtheorem{example}[theorem]{Example}
\newtheorem{corollary}[theorem]{Corollary}
\theoremstyle{definition}
\newtheorem{definition}[theorem]{Definition}
\theoremstyle{remark}
\date{September 3, 2013}
\def\N{\mathbb{N}}
\def\rest{\upharpoonright}
\def\Rm{\mathrm{Rm}}
\def\mymod{\mbox{ mod }}
\newcommand{\claim}[1]{\vspace{1 mm}\noindent \textbf{Claim #1} \hspace{2 mm}}
\begin{document}

\begin{frontmatter}

\title{\emph{Guessing, Mind-changing, and the Second Ambiguous Class}}

\runtitle{Guessing and Mind-changing}

\author{\fnms{Samuel}
 \snm{Alexander}
 \corref{}
 \ead[label=e1]{alexander@math.ohio-state.edu}
}

\address{Department of Mathematics\\
 The Ohio State University\\
 231 West 18th Ave\\
 Columbus OH 43210
 USA\\
 \printead{e1}
}

\runauthor{S.~Alexander}

\begin{abstract}
In his dissertation, Wadge
defined a notion of guessability on subsets of the Baire space
and gave two characterizations of guessable sets.
A set is guessable iff it is in the
second ambiguous class ($\bm{\Delta}^0_2$),
iff
it is eventually annihilated by
a certain remainder.
We simplify this remainder and
give a new proof of the latter equivalence.
We then introduce a notion of guessing with an
ordinal limit on how often one can change one's mind.
We show that for every ordinal $\alpha$,
a guessable set is annihilated by
$\alpha$ applications of the simplified remainder
if and only if it is guessable with fewer than $\alpha$
mind changes.
We use guessability with fewer than $\alpha$ mind changes
to give a semi-characterization of the Hausdorff difference
hierarchy, and indicate how Wadge's notion of guessability
can be generalized to higher-order guessability, providing
characterizations of $\bm{\Delta}^0_\alpha$ for all
successor ordinals $\alpha>1$.
\end{abstract}

\begin{keyword}[class=AMS]
\kwd[Primary ]{03E15}
\end{keyword}

\begin{keyword}
\kwd{guessability} \kwd{difference hierarchy}
\end{keyword}

\end{frontmatter}

\section{Introduction}
\label{introsect}

Let $\N^\N$ be the set of sequences $s:\N\to\N$ and let
$\N^{<\N}$ be the set $\cup_n \N^n$ of finite sequences.
If $s\in\N^{<\N}$, we will write $[s]$ for
$\{f\in\N^{\N}\,:\,\mbox{$f$ extends $s$}\}$.
We equip $\N^\N$ with a second-countable topology by
declaring $[s]$ to be a basic open set whenever $s\in\N^{<\N}$.

Throughout the paper, $S$ will denote a subset of $\N^\N$.
We say that $S\in\bm{\Delta}^0_2$ if $S$ is simultaneously
a countable intersection of open sets and a countable union
of closed sets in the above topology.  In classic terminology,
$S\in\bm{\Delta}^0_2$ just in case $S$ is both $G_{\delta}$ and $F_{\sigma}$.

The following notion was discovered by Wadge \cite{wadge} (pp.~141--142) and independently
by this author
\cite{alexander2011}.
\footnote{A third independent usage of the term \emph{guessable},
with similar but not the same meaning, appears in \cite{tsaban} (p.~1280), where a subset 
$Y\subseteq\N^\N$
is called guessable if there is a function $g\in\N^\N$ such that for each $f\in Y$, $g(n)=f(n)$ 
for 
infinitely many $n$.}

\begin{definition}
\label{guessabledefn}
We say $S$ is \emph{guessable} if there is a function
$G:\N^{<\N}\to\{0,1\}$ such that
for every $f\in\N^\N$,
\[
\lim_{n\to\infty} G(f\rest n)
= \chi_S(f)
 = \left\{ \begin{array}{l} \mbox{$1$, if $f\in S$,}\\ \mbox{$0$, if 
$f\not\in S$.}\end{array}\right.
\]
If so, we say $G$ \emph{guesses} $S$, or that $G$ is an 
\emph{$S$-guesser}.
\end{definition}

The intution behind the above notion is captured eloquently by Wadge 
(p.~142, notation changed):
\begin{quote}
\label{wadgequote}
Guessing sets allow us to form an opinion as to whether an element
$f$ of $\N^\N$ is in $S$ or $S^c$, given only a finite initial segment
$f\rest n$ of $f$.
\end{quote}
Game theoretically, one envisions an asymmetric game where $II$ (the guesser) has perfect
information, $I$ (the sequence chooser) has zero information,
and $II$'s winning set consists of all sequences $(a_0,b_0,a_1,b_1,\ldots)$
such that $b_i\to 1$ if $(a_0,a_1,\ldots)\in S$ and $b_i\to 0$ otherwise.

The following result was proved in \cite{wadge} (pp.144--145)
by infinite game-theoretical methods.  The present author found
a second proof
\cite{alexander2011}
using mathematical logical methods.

\begin{theorem}
\label{delta2isguessable}
(Wadge)
$S$ is guessable if and only if $S\in\bm{\Delta}^0_2$.
\end{theorem}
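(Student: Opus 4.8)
The plan is to prove both directions by relating guessers to the standard characterization of $\bm{\Delta}^0_2$ sets as those admitting a ``resolvable'' or limit-type representation.

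For the direction ``$S\in\bm{\Delta}^0_2 \implies S$ guessable,'' I would write $S=\bigcup_n F_n$ with $F_n$ closed and, simultaneously, $S^c=\bigcup_n C_n$ with $C_n$ closed; by intersecting with initial segments of the $F$'s I may assume $F_0\subseteq F_1\subseteq\cdots$ and $C_0\subseteq C_1\subseteq\cdots$. The key point is that for any $f\in\N^\N$, exactly one of $f\in F_n$ (some $n$) or $f\in C_n$ (some $n$) holds, and membership in a \emph{closed} set can be detected ``in the limit'' from initial segments: $f\in F_n$ iff every initial segment $f\rest k$ has an extension into $F_n$ is \emph{not} quite right, so instead I use that $F_n^c$ is open, hence a union of basic clopen sets $[s]$, so $f\notin F_n$ is witnessed by some finite $f\rest k$. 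The guesser $G(f\rest k)$ then does the following bookkeeping: it computes the least $n$ such that $f\rest k$ already forces membership in (i.e.\ lies entirely inside the open set complementary to the first $n$ closed sets it has not yet entered) — more cleanly, $G(\sigma)$ looks for the least $n\le|\sigma|$ such that $[\sigma]$ is contained in the open set $\bigcap_{m<n}C_m^c$ but $[\sigma]$ meets $F_n$, and outputs $1$ in that case, $0$ otherwise, with a suitable default. One then checks that along any fixed $f$ this stabilizes to $\chi_S(f)$, using that the relevant ``$[\sigma]\subseteq$ open set'' conditions are eventually triggered.

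For the converse, ``$S$ guessable $\implies S\in\bm{\Delta}^0_2$,'' suppose $G$ guesses $S$. Then
\[
S=\{f : \lim_n G(f\rest n)=1\}=\bigcup_{N}\bigcap_{n\ge N}\{f : G(f\rest n)=1\}.
\]
Each set $\{f : G(f\rest n)=1\}$ is clopen (it depends only on $f\rest n$), so $\bigcap_{n\ge N}\{f:G(f\rest n)=1\}$ is closed, and the countable union over $N$ shows $S\in F_\sigma$. Symmetrically, applying the same reasoning to $\lim_n G(f\rest n)=0$ shows $S^c\in F_\sigma$, i.e.\ $S\in G_\delta$. Hence $S\in\bm{\Delta}^0_2$.

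The easy direction is the converse; it is essentially a one-line unwinding of the definition of $\lim$. The main obstacle is the forward direction: one must extract, from the two abstract $F_\sigma$ representations of $S$ and $S^c$, an \emph{effective finite-approximation procedure} — the subtlety is that the guesser sees only $f\rest n$ and must commit to a value, so I need the approximations to be organized so that the ``evidence'' that pushes the guess toward $1$ or toward $0$ arrives monotonically enough that the limit is correct. The cleanest route is to phrase everything in terms of the clopen basis: replace each closed set by the complement of a union of basic clopen sets, enumerate those basic sets, and let the guesser at stage $n$ decide based on which of the two enumerations has, so far, ``captured'' $f\rest n$ at a lower index. Verifying that this produces the correct limit — and in particular that it does not oscillate forever — is where the real work lies, and it hinges on the disjointness of the two $F_\sigma$ unions together with the fact that each $f$ is eventually permanently inside one of the closed pieces.
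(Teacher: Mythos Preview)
Your converse direction (guessable $\Rightarrow\bm{\Delta}^0_2$) is correct and is exactly the clean one-line argument via
\[
S=\bigcup_N\bigcap_{n\ge N}\{f:G(f\rest n)=1\},
\]
each inner set being clopen.

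Regarding the forward direction, two comments. First, as a point of comparison: the paper does \emph{not} actually prove Theorem~\ref{delta2isguessable}; it is quoted from Wadge's thesis (an infinite-game argument) and from the author's earlier paper (a logical argument), and is then \emph{used} as an input to Theorem~\ref{theorem9}. The closest the present paper comes to an internal proof is the remark after Theorem~\ref{higherorderguessabilitycharacterization}: specializing Lemma~\ref{kuratowskilemma} to $\mu=1$ gives clopen sets $A_m$ with $\chi_S(f)=\lim_m\chi_{A_m}(f)$, from which a guesser is immediate. That route---reduce to a $\liminf=\limsup$ of clopen sets and read off the guesser---is both shorter and cleaner than what you sketch.

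Second, the specific guesser you propose is not yet a proof. The description ``the least $n\le|\sigma|$ such that $[\sigma]\subseteq\bigcap_{m<n}C_m^c$ but $[\sigma]$ meets $F_n$'' does not pin down a well-behaved function: with the $F_n$ increasing, $[\sigma]$ meets $F_0$ as soon as it meets any $F_n$, and the condition $[\sigma]\subseteq\bigcap_{m<0}C_m^c$ is vacuous, so your ``least $n$'' is typically $0$ regardless of where $f$ sits; the rule then outputs $1$ whenever $[\sigma]$ meets $F_0$, which need not converge to $\chi_S(f)$ for $f\notin S$. The informal fallback (``let the guesser decide based on which of the two enumerations has captured $f\rest n$ at a lower index'') is the right instinct, but you have to say precisely what ``captured'' means---e.g.\ interleave enumerations of basic clopen sets covering each $F_n^c$ and each $C_n^c$, and output according to the side of the \emph{first} listed basic set containing $\sigma$---and then actually verify stabilization. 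As written, the forward half is a plan with a gap at the definition of $G$, not a proof; the Kuratowski $\liminf/\limsup$ formulation (Lemma~\ref{kuratowskilemma} with $\mu=1$) lets you bypass this bookkeeping entirely.
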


Wadge defined (pp.~113--114) the following
remainder operation.

\begin{definition}
\label{wadgederiv}
For $A,B\subseteq \N^\N$, define $\Rm_0(A,B)=\N^\N$.
For $\mu>0$ an ordinal,
define
\[
\Rm_{\mu}(A,B)
= \bigcap_{\nu<\mu}\left(
\overline{\Rm_{\nu}(A,B)\cap A}
\cap
\overline{\Rm_{\nu}(A,B)\cap B}
\right).
\]
(Here $\overline{\bullet}$ denotes topological closure.)
Write $\Rm_{\mu}(S)$ for $\Rm_{\mu}(S,S^c)$.
\end{definition}

By countability considerations,
there is some (in fact countable) ordinal $\mu$,
depending on $S$, such that $\Rm_{\mu}(S)=\Rm_{\mu'}(S)$
for all $\mu'\geq \mu$;  Wadge writes $\Rm_{\Omega}(S)$
for $\Rm_{\mu}(S)$ for such a $\mu$.  He then proves the following
theorem:

\begin{theorem}
\label{wadgehausdorff}
(Wadge, attributed to Hausdorff)
$S\in\bm{\Delta}^0_2$ if and only if $\Rm_{\Omega}(S)=\emptyset$.
\end{theorem}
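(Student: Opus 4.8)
The plan is to prove the two implications separately, working directly from the definition of $\bm{\Delta}^0_2$ as ``$G_\delta$ and $F_\sigma$''; at the end I will point out an alternative for the harder direction via Theorem~\ref{delta2isguessable}. First I would record the routine facts about the remainder: each $\Rm_\mu(S)$ is closed (an intersection of closures, with $\Rm_0(S)=\N^\N$), the sequence $\langle\Rm_\mu(S)\rangle_\mu$ is $\subseteq$-decreasing, at a limit $\lambda$ one has $\Rm_\lambda(S)=\bigcap_{\nu<\lambda}\Rm_\nu(S)$, and at a successor $\Rm_{\mu+1}(S)=\Rm_\mu(S)\cap\overline{\Rm_\mu(S)\cap S}\cap\overline{\Rm_\mu(S)\cap S^c}$; both identities fall out of unwinding the definition together with monotonicity. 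Since $\N^\N$ is second countable, a $\subseteq$-decreasing transfinite sequence of closed sets stabilizes at some countable ordinal; let $\mu_\infty$ be least with $\Rm_{\mu_\infty}(S)=\Rm_{\mu_\infty+1}(S)$, so that $P:=\Rm_\Omega(S)=\Rm_{\mu_\infty}(S)$. Feeding $\mu_\infty$ into the successor formula, and using that $P$ is closed, gives $P=\overline{P\cap S}=\overline{P\cap S^c}$; that is, $P\cap S$ and $P\cap S^c$ are both dense in $P$. This last fact is the fulcrum of both directions.

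For $S\in\bm{\Delta}^0_2\Rightarrow P=\emptyset$ I would argue by contradiction with a Baire-category argument. Suppose $P\ne\emptyset$. As a closed subspace of the Polish space $\N^\N$, $P$ is a nonempty Polish, hence Baire, space. Writing $S=\bigcap_n U_n$ with the $U_n$ open (possible since $S$ is $G_\delta$), density of $P\cap S$ in $P$ forces each $P\cap U_n\supseteq P\cap S$ to be dense open in $P$, so $P\cap S=\bigcap_n(P\cap U_n)$ is comeager in $P$. Since $S$ is also $F_\sigma$, $S^c$ is $G_\delta$, and the identical argument using density of $P\cap S^c$ in $P$ makes $P\cap S^c$ comeager in $P$. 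But a nonempty Baire space has no two disjoint comeager subsets, and $P\cap S$, $P\cap S^c$ are disjoint; contradiction, so $P=\emptyset$.

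For the converse, $P=\emptyset\Rightarrow S\in\bm{\Delta}^0_2$, I would build an $F_\sigma$ presentation of $S$ (and symmetrically of $S^c$). With $P=\emptyset$ the $\Rm_\mu(S)$ decrease to $\emptyset$, so every $f\in\N^\N$ lies in $\Rm_{r(f)}(S)\setminus\Rm_{r(f)+1}(S)$ for a unique ordinal $r(f)<\mu_\infty$ (limit-continuity forces $r(f)$ to be the predecessor of the least $\mu$ with $f\notin\Rm_\mu(S)$, and that least $\mu$ cannot be a limit). Applying the successor formula at $r(f)$: from $f\in\Rm_{r(f)}(S)\setminus\Rm_{r(f)+1}(S)$ we get $f\notin\overline{\Rm_{r(f)}(S)\cap S^c}$ or $f\notin\overline{\Rm_{r(f)}(S)\cap S}$; in the first case $f\in S$ and some basic clopen $[s]\ni f$ satisfies $[s]\cap\Rm_{r(f)}(S)\subseteq S$, in the second $f\notin S$ and some $[s]\ni f$ satisfies $[s]\cap\Rm_{r(f)}(S)\subseteq S^c$. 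Now put $D_\mu=\bigcup\{[s]:[s]\cap\Rm_\mu(S)\subseteq S\}$, an open set. The witnesses just produced give $S\cap\Rm_\mu(S)\subseteq D_\mu$, while $D_\mu\cap\Rm_\mu(S)\subseteq S$ is immediate, so $S\cap\Rm_\mu(S)=D_\mu\cap\Rm_\mu(S)$ for every $\mu$. Since the sets $\Rm_\mu(S)\setminus\Rm_{\mu+1}(S)$ for $\mu<\mu_\infty$ partition $\N^\N$, this yields $S=\bigcup_{\mu<\mu_\infty}\bigl(D_\mu\cap(\Rm_\mu(S)\setminus\Rm_{\mu+1}(S))\bigr)$. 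Each summand is the intersection of the open set $D_\mu$ with the locally closed set $\Rm_\mu(S)\setminus\Rm_{\mu+1}(S)$, hence is itself locally closed and therefore $F_\sigma$ in the metrizable space $\N^\N$; and $\mu_\infty$ is countable, so the union is $F_\sigma$. Replacing $S$ by $S^c$ throughout shows $S^c$ is $F_\sigma$, i.e.\ $S$ is $G_\delta$, so $S\in\bm{\Delta}^0_2$.

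I expect the converse direction to be the main obstacle: one must extract exactly the right local witness from the failure of $f\in\Rm_{r(f)+1}(S)$, check that the sets $D_\mu$ really do recover $S$ on each layer $\Rm_\mu(S)\setminus\Rm_{\mu+1}(S)$, and then see that every layer-piece is term-by-term $F_\sigma$ — all on top of the transfinite bookkeeping pinning down the limit behaviour of the remainder. The forward direction, by contrast, is a two-line Baire-category observation once the density of $P\cap S$ and of $P\cap S^c$ in $P$ is in hand. An alternative for the converse would be to bypass the explicit decomposition: use the ranks $r(\cdot)$ and the local witnesses to define a guesser $G\colon\N^{<\N}\to\{0,1\}$, show $G(f\rest n)\to\chi_S(f)$, and quote Theorem~\ref{delta2isguessable}; but keeping track of which finite stems trigger which guess looks no cheaper than the $F_\sigma$/$G_\delta$ computation above.
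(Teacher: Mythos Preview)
Your argument is correct and takes a genuinely different route from the paper's. The paper does not work directly with $\Rm_\mu(S)$: it introduces a tree-level remainder $S_\alpha\subseteq\N^{<\N}$ (Definition~\ref{myderiv}), proves $[S_\alpha]=\Rm_\alpha(S)$ (Lemma~\ref{simplifier}), and then shows $S_\infty=\emptyset\iff S$ is guessable (Propositions~\ref{proposition4} and~\ref{proposition8}), finally invoking Theorem~\ref{delta2isguessable} to reach $\bm{\Delta}^0_2$. So the paper's proof is essentially the ``alternative'' you sketch in your last paragraph, with the guesser $G_S$ of Definition~\ref{gsubsdefn} built precisely from the local witnesses you describe. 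Your forward direction via Baire category is slicker and self-contained (the paper leans on Theorem~\ref{delta2isguessable} in both directions), and your backward direction produces the $F_\sigma$ presentation explicitly rather than routing through guessability. The paper's detour through the tree remainder $S_\alpha$, on the other hand, is what drives the rest of the paper: it is exactly this object that gets matched with mind-change bounds in Section~\ref{secondsection}.

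One small correction: the intermediate claim ``$S\cap\Rm_\mu(S)\subseteq D_\mu$'' is false as stated. If $f\in S$ has $r(f)>\mu$ then $f\in\Rm_{\mu+1}(S)\subseteq\overline{\Rm_\mu(S)\cap S^c}$, so every basic $[s]\ni f$ meets $\Rm_\mu(S)\cap S^c$ and hence $f\notin D_\mu$. What your witnesses actually yield is $S\cap(\Rm_\mu(S)\setminus\Rm_{\mu+1}(S))\subseteq D_\mu$, and that, together with the immediate $D_\mu\cap\Rm_\mu(S)\subseteq S$, is exactly what the layer-by-layer decomposition needs. The displayed union and the $F_\sigma$ conclusion are unaffected.
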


In Section \ref{firstsection}, we introduce a simpler remainder
$(S,\alpha)\mapsto S_\alpha$ and use it to give a new proof of Theorem 
\ref{wadgehausdorff}.

In Section \ref{secondsection}, we introduce the notion of $S$ being 
guessable while changing one's mind fewer than $\alpha$ many times ($\alpha\in 
\mathrm{Ord}$) and show that this is equivalent to 
$S_\alpha=\emptyset$.

In Section \ref{diffhiersection}, we show that for $\alpha>0$,
$S$ is guessable while changing one's mind fewer than $\alpha+1$ many times
if and only if at least one of $S$ or $S^c$ is in the $\alpha$th level of the difference hierarchy.

In Section \ref{higherordersection}, we generalize guessability, introducing the notion
of $\mu$th-order guessability ($1\leq\mu<\omega_1$).  We show that $S$ is $\mu$th-order guessable
if and only if $S\in\bm{\Delta}^0_{\mu+1}$.

\section{Guessable Sets and Remainders}
\label{firstsection}

In this section we give a new proof of Theorem \ref{wadgehausdorff}.
We find it easier to work with the following remainder\footnote{In general,
there seems to be a correspondence between remainders on $\N^{\N}$
and remainders on $\N^{<\N}$ that take trees to trees; in
the future we might publish more general work based on this
observation.} which is closely 
related to the remainder defined by Wadge.
For $X\subseteq \N^{<\N}$, we will write $[X]$
to denote the set of infinite sequences all of whose finite initial 
segments
lie in $X$.

\begin{definition}
\label{myderiv}
Let $S\subseteq\N^\N$.
We define $S_{\alpha}\subseteq\N^{<\N}$ ($\alpha\in \mathrm{Ord}$)
by transfinite recursion as follows.
We define 
$S_0=\N^{<\N}$, and
$S_\lambda=\cap_{\beta<\lambda}S_{\beta}$
for every limit ordinal $\lambda$.
Finally,
for every ordinal $\beta$, we define
\[
S_{\beta+1}
=
\{
x\in S_{\beta}\,:\,
\mbox{$\exists x',x''\in [S_{\beta}]$
such that $x\subseteq x'$, $x\subseteq x''$, $x'\in S$, $x''\not\in S$}\}.
\]
We write $\alpha(S)$ for the minimal ordinal $\alpha$ such that 
$S_{\alpha}=S_{\alpha+1}$, and we write $S_{\infty}$ for $S_{\alpha(S)}$.
\end{definition}

Clearly $S_{\alpha}\subseteq S_{\beta}$ whenever $\beta<\alpha$.
This remainder notion is related to Wadge's as follows.

\begin{lemma}
\label{simplifier}
For each ordinal $\alpha$, $\Rm_{\alpha}(S)=[S_{\alpha}]$.
\end{lemma}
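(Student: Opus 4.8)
The plan is to verify the identity $\Rm_\alpha(S)=[S_\alpha]$ by transfinite induction on $\alpha$, after isolating a few elementary facts. The first is that for every $X\subseteq\N^{<\N}$ the set $[X]$ is closed: it equals $\bigcap_n\{f:f\rest n\in X\}$, an intersection of clopen sets. The second is that the operation $X\mapsto[X]$ commutes with arbitrary intersections, so that $\big[\bigcap_i X_i\big]=\bigcap_i[X_i]$. The third is the usual reformulation of closure via the basic neighborhoods: for any $A\subseteq\N^\N$ and any $f$, we have $f\in\overline{A}$ if and only if $[f\rest n]\cap A\neq\emptyset$ for every $n$. Finally I would note two easy consequences of Definition \ref{wadgederiv}, granted that every $\Rm_\nu(S)$ is closed (it is $\N^\N$ for $\nu=0$ and an intersection of closures otherwise): that $\Rm_\lambda(S)=\bigcap_{\nu<\lambda}\Rm_\nu(S)$ for limit $\lambda$, and that $\Rm_{\beta+1}(S)=\overline{\Rm_\beta(S)\cap S}\cap\overline{\Rm_\beta(S)\cap S^c}$ --- in the latter the factor $\Rm_\beta(S)$ appearing in the defining intersection is absorbed, since both closures already sit inside $\overline{\Rm_\beta(S)}=\Rm_\beta(S)$.

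The base case is immediate, since $\Rm_0(S)=\N^\N=[\N^{<\N}]=[S_0]$. For a limit ordinal $\lambda$, the induction hypothesis and the facts above give $\Rm_\lambda(S)=\bigcap_{\nu<\lambda}\Rm_\nu(S)=\bigcap_{\nu<\lambda}[S_\nu]=\big[\bigcap_{\nu<\lambda}S_\nu\big]=[S_\lambda]$, the last equality being the limit clause of Definition \ref{myderiv}.

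The successor step carries the real content. Assuming $\Rm_\beta(S)=[S_\beta]$, I must show $\overline{[S_\beta]\cap S}\cap\overline{[S_\beta]\cap S^c}=[S_{\beta+1}]$, which I would establish by directly unwinding Definition \ref{myderiv}: a point $f$ lies in $[S_{\beta+1}]$ precisely when every initial segment $f\rest n$ belongs to $S_{\beta+1}$, that is, when for each $n$ we have $f\rest n\in S_\beta$ and there exist $x',x''\in[S_\beta]$ extending $f\rest n$ with $x'\in S$ and $x''\notin S$. Quantified over $n$, the first requirement says $f\in[S_\beta]$; the two existence clauses say precisely that $[f\rest n]$ meets $[S_\beta]\cap S$ and meets $[S_\beta]\cap S^c$, hence, by the closure reformulation, that $f\in\overline{[S_\beta]\cap S}$ and $f\in\overline{[S_\beta]\cap S^c}$. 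Since these two closures both lie inside $\overline{[S_\beta]}=[S_\beta]$, the requirement $f\in[S_\beta]$ is redundant, and we conclude $[S_{\beta+1}]=\overline{[S_\beta]\cap S}\cap\overline{[S_\beta]\cap S^c}=\Rm_{\beta+1}(S)$, closing the induction.

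The only point demanding genuine care is this last step, and the difficulty there is essentially bookkeeping: one must keep the two uses of square brackets firmly apart --- $[s]$ as the basic open cone over a node, and $[X]$ as the set of infinite sequences whose finite initial segments all lie in $X$ --- and see that the clause ``$f\rest n$ has an extension lying in $A$'' is merely a restatement of ``$f\in\overline{A}$''. Once that dictionary is fixed there is no real obstacle; Definition \ref{myderiv} has evidently been arranged so that one application of its successor clause mirrors one application of Wadge's operation $B\mapsto\overline{B\cap S}\cap\overline{B\cap S^c}$, and passing to bodies of trees converts the node-level recursion into the point-level one.
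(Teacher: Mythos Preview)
Your proof is correct and follows essentially the same approach as the paper's: both argue by transfinite induction, matching the successor clause of $S_{\beta+1}$ against $\overline{\Rm_\beta(S)\cap S}\cap\overline{\Rm_\beta(S)\cap S^c}$ via the basic-open characterization of closure. The only difference is organizational: the paper collapses the three cases into one by first noting $S_\alpha=\bigcap_{\beta<\alpha}S_{\beta+1}$ (with the empty intersection equal to $\N^{<\N}$) and then comparing this directly to Wadge's intersection over $\nu<\alpha$, whereas you first reduce $\Rm$ to a standard base/successor/limit recursion and then match each case separately.
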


\begin{proof}
Since $S_{\alpha}\subseteq S_{\beta}$ whenever $\beta<\alpha$,
for all $\alpha$, we have $S_{\alpha}=\cap_{\beta<\alpha}S_{\beta+1}$
(with the convention that $\cap\emptyset=\N^{<\N}$).
We will show by induction on $\alpha$
that $\Rm_{\alpha}(S)=[S_\alpha]=[\cap_{\beta<\alpha}S_{\beta+1}]$.

Suppose $f\in [\cap_{\beta<\alpha}S_{\beta+1}]$.
Let $\beta<\alpha$.
Let $\mathcal{U}$ be an open set around $f$,
we can assume $\mathcal{U}$ is basic open, so
$\mathcal{U}=[f_0]$, $f_0$ a finite initial segment of $f$.
Since $f\in [\cap_{\beta<\alpha}S_{\beta+1}]$,
$f_0\in S_{\beta+1}$.
Thus there are $x',x''\in [S_\beta]$ extending $f_0$ (hence
in $\mathcal{U}$),
$x'\in S$, $x''\not\in S$.
In other words, $x'\in [\cap_{\gamma<\beta} S_{\gamma+1}]\cap S$
and $x''\in [\cap_{\gamma<\beta} S_{\gamma+1}]\cap S^c$.
By induction, $x'\in \Rm_\beta(S)\cap S$
and $x''\in \Rm_\beta(S)\cap S^c$.  By arbitrariness
of $\mathcal{U}$,
$f\in \overline{\Rm_{\beta}(S)\cap S}\cap \overline{\Rm_{\beta}(S)\cap 
S^c}$.
By arbitrariness of $\beta$,
$f\in \Rm_{\alpha}(S)$.

The reverse inclusion is similar.
\end{proof}

Note that Lemma \ref{simplifier} does not say that $\Rm_{\alpha}(S)=\emptyset$
if and only if $S_\alpha=\emptyset$.  It is (at least a priori) possible that
$S_\alpha\not=\emptyset$ while $[S_\alpha]=\emptyset$.
Lemma \ref{simplifier} does however imply that $\Rm_{\Omega}(S)=\emptyset$
if and only if $S_\infty=\emptyset$, since it is easy to see that
if $[S_\alpha]=\emptyset$ then $S_{\alpha+1}=\emptyset$.
Thus in order to prove Theorem \ref{wadgehausdorff}
it suffices to show that $S$ is guessable if and only if 
$S_{\infty}=\emptyset$.
The $\Rightarrow$ direction requires no additional machinery.

\begin{proposition}
\label{proposition4}
If $S$ is guessable then $S_{\infty}=\emptyset$.
\end{proposition}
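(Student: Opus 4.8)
The plan is to argue by contradiction: suppose $S$ is guessable via some $S$-guesser $G:\N^{<\N}\to\{0,1\}$, and suppose toward a contradiction that $S_\infty\neq\emptyset$. Since $S_\infty = S_{\alpha(S)} = S_{\alpha(S)+1}$, the stationary remainder $T := S_\infty$ satisfies the fixed-point property: for every $x\in T$ there exist $x',x''\in[T]$ with $x\subseteq x'$, $x\subseteq x''$, $x'\in S$, and $x''\notin S$. In particular $T$ is nonempty and contains at least one finite sequence, hence $[T]$ is nonempty.

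The key step is to build, by recursion along $\N$, a single sequence $f\in[T]$ along which $G$ changes its value infinitely often, contradicting the hypothesis that $\lim_n G(f\rest n)$ exists. First I would fix $x_0\in T$. Given a node $x_n\in T$ with $G(x_n)$ defined (some value in $\{0,1\}$), I use the fixed-point property to find an extension of $x_n$ inside $[T]\cap S$ and another inside $[T]\cap S^c$; since $G$ guesses $S$, along the first extension $G$ eventually stabilizes at $1$ and along the second it eventually stabilizes at $0$. Hence I can pick a finite initial segment $x_{n+1}\supseteq x_n$, still lying in $T$ (every finite initial segment of a member of $[T]$ is in $T$), with $G(x_{n+1})\neq G(x_n)$ — concretely, extend into whichever of the two branches has the value opposite to $G(x_n)$, far enough out that $G$ has already flipped. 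Taking $f=\bigcup_n x_n$ gives $f\in[T]\subseteq\N^\N$ with $G(f\rest{|x_n|})$ alternating, so $\lim_n G(f\rest n)$ does not exist, contradicting Definition~\ref{guessabledefn}.

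The main obstacle to watch is the bookkeeping in the recursive step: one must verify that the chosen $x_{n+1}$ genuinely lies in $T$ (this is immediate since $[T]$ is the set of branches through $T$ and $T$ is closed under initial segments as an intersection of such sets), and that "far enough out" is legitimate, i.e. that $G$'s eventual stabilization along a branch really does let us pick a finite stage with the desired flipped value. Both points are routine once the fixed-point characterization of $S_\infty$ is in hand, so the proof is short; the only genuine content is recognizing that $S_\infty\neq\emptyset$ forces a branch realizing both a point of $S$ and a point of $S^c$ arbitrarily far out, which is exactly what guessability forbids.
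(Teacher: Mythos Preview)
Your proposal is correct and follows essentially the same approach as the paper: both use the fixed-point property $S_\infty=S_{\alpha(S)+1}$ to recursively build a strictly increasing chain of finite sequences in $S_\infty$ on which the guesser $G$ alternates values, then take the union to obtain an $f$ on which $G$ fails to converge. The only cosmetic difference is that the paper tracks the alternation explicitly via parity ($G(\sigma_i)\equiv i\bmod 2$), whereas you phrase it as $G(x_{n+1})\neq G(x_n)$; the bookkeeping points you flag (that $x_{n+1}\in T$ because it is an initial segment of a member of $[T]$, and that eventual stabilization of $G$ lets you go far enough to flip) are exactly the ones the paper handles as well.
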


\begin{proof}
Let $G:\N^{<\N}\to \{0,1\}$ be an $S$-guesser.
Assume (for contradiction)
$S_\infty\not=\emptyset$ and let $\sigma_0\in S_\infty$.
We will build a sequence on whose initial segments $G$
diverges, contrary to Definition \ref{guessabledefn}.
Inductively suppose we have finite sequences
$\sigma_0\subset_{\not=} \cdots \subset_{\not=} \sigma_k$
in $S_\infty$
such that $\forall 0<i\leq k$, $G(\sigma_i)\equiv i\,\mymod 2$.
Since $\sigma_k\in S_\infty=S_{\alpha(S)}=S_{\alpha(S)+1}$,
there are $\sigma',\sigma''\in[S_\infty]$,
extending $\sigma_k$, with $\sigma'\in S$, $\sigma''\not\in S$.
Choose $\sigma\in\{\sigma',\sigma''\}$ with
$\sigma\in S$ iff $k$ is even.  Then $\lim_{n\to\infty}
G(\sigma\rest n)\equiv k+1\mymod 2$.
Let $\sigma_{k+1}\subset \sigma$ properly extend $\sigma_k$
such that $G(\sigma_{k+1})\equiv k+1\mymod 2$.
Note $\sigma_{k+1}\in S_{\infty}$
since $\sigma\in [S_\infty]$.

By induction, there are $\sigma_0\subset_{\not=}\sigma_1
\subset_{\not=}\cdots$ such that for $i>0$,
$G(\sigma_i)\equiv i\mymod 2$.
This contradicts Definition \ref{guessabledefn}
since $\lim_{n\to\infty} G((\cup_i\sigma_i)\rest n)$ ought to
converge.
\end{proof}

The $\Leftarrow$ direction requires a little machinery.

\begin{definition}
\label{betadefn}
If $\sigma\in\N^{<\N}$, $\sigma\not\in S_\infty$,
let $\beta(\sigma)$ be the least
ordinal such that $\sigma\not\in S_{\beta(\sigma)}$.
\end{definition}

Note that whenever $\sigma\not\in S_\infty$,
$\beta(\sigma)$ is a successor ordinal.

\begin{lemma}
\label{lemma6}
Suppose $\sigma\subseteq\tau$ are finite sequences.
If $\tau\in S_\infty$ then $\sigma\in S_\infty$.
And if $\sigma\not\in S_\infty$, then $\beta(\tau)\leq \beta(\sigma)$.
\end{lemma}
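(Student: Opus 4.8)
The plan is to prove both assertions by unwinding Definition~\ref{myderiv}, using the key fact (already noted) that $S_\alpha = \cap_{\beta<\alpha}S_{\beta+1}$ and that the passage from $S_\beta$ to $S_{\beta+1}$ only removes sequences, so the family $(S_\alpha)$ is decreasing. First I would establish the monotonicity statement about $S_\infty$. Since $S_\infty = S_{\alpha(S)}$ is a fixed point, it suffices to show that for each $\beta$, if $\tau\in S_{\beta+1}$ and $\sigma\subseteq\tau$ then $\sigma\in S_{\beta+1}$, and then propagate this through the limit stages (where it is immediate, since $S_\lambda$ is an intersection) by transfinite induction on $\beta$. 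For the successor step: suppose $\tau\in S_{\beta+1}$, so by definition there are $x',x''\in[S_\beta]$ with $\tau\subseteq x'$, $\tau\subseteq x''$, $x'\in S$, $x''\notin S$. If $\sigma\subseteq\tau$ then $\sigma\subseteq x'$ and $\sigma\subseteq x''$ as well, and by the inductive hypothesis (applied to $S_\beta$, which is $\cap_{\gamma<\beta}S_{\gamma+1}$ and hence downward closed under initial segments by induction) $\sigma\in S_\beta$; thus the very same $x',x''$ witness $\sigma\in S_{\beta+1}$. Hence $S_\infty$ is closed under taking initial segments.

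Next I would handle the second assertion, the inequality $\beta(\tau)\leq\beta(\sigma)$ when $\sigma\subseteq\tau$ and $\sigma\notin S_\infty$. This is really the contrapositive of the first part done stage by stage. The cleanest formulation: I claim that for every ordinal $\alpha$, if $\tau\in S_\alpha$ and $\sigma\subseteq\tau$ then $\sigma\in S_\alpha$ — i.e., \emph{every} $S_\alpha$, not just $S_\infty$, is closed under initial segments. This follows by the same transfinite induction as above (limit stages trivial, successor stage as in the previous paragraph). Granting this, fix $\sigma\subseteq\tau$ with $\sigma\notin S_\infty$, and write $\gamma=\beta(\sigma)$, so $\sigma\notin S_\gamma$. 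Then $\tau\notin S_\gamma$ as well, for otherwise initial-segment-closure of $S_\gamma$ would force $\sigma\in S_\gamma$, a contradiction. Therefore $\tau\notin S_\infty$ (since $S_\infty\subseteq S_\gamma$, so $\beta(\tau)$ is defined) and $\beta(\tau)$, being the least ordinal with $\tau\notin S_{\beta(\tau)}$, satisfies $\beta(\tau)\leq\gamma=\beta(\sigma)$.

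I would phrase the whole argument around the single auxiliary claim ``for all $\alpha$, $S_\alpha$ is closed under initial segments,'' proved once by transfinite induction, and then derive both sentences of the lemma as corollaries (the first since $S_\infty=S_{\alpha(S)}$ is one of the $S_\alpha$; the second by the minimality argument above). The only step requiring any thought is the successor case of the induction, and even there the point is simply that the witnesses $x',x''$ for $\tau$ serve equally well for any $\sigma\subseteq\tau$ once we know $\sigma\in S_\beta$ — so the induction hypothesis is used in two places, once to get $\sigma\in S_\beta$ and implicitly once in that $S_\beta=\cap_{\gamma<\beta}S_{\gamma+1}$ is downward closed. There is no real obstacle; the lemma is a bookkeeping consequence of the recursive definition, and the main care is just to set up the induction on the right statement (all $S_\alpha$, not merely $S_\infty$) so that the successor step has a usable hypothesis.
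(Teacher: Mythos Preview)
Your proposal is correct and follows essentially the same route as the paper: the paper too reduces everything to the single claim that for every ordinal $\beta$, if $\tau\in S_\beta$ then $\sigma\in S_\beta$, and proves this by transfinite induction with the same successor-step argument (the witnesses $x',x''$ for $\tau$ also serve for $\sigma$, once induction gives $\sigma\in S_{\beta-1}$). The only difference is that you spell out explicitly how the two sentences of the lemma follow from this claim, whereas the paper simply asserts ``it is enough to show'' and leaves that derivation to the reader.
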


\begin{proof}
It is enough to show that $\forall \beta\in\mathrm{Ord}$,
if $\tau\in S_\beta$ then $\sigma\in S_\beta$.
This is by induction on $\beta$, the limit and zero cases being trivial.
Assume $\beta$ is successor.  If $\tau\in S_\beta$,
this means $\tau\in S_{\beta-1}$
and there are $\tau',\tau''\in [S_{\beta-1}]$ extending $\tau$
with $\tau'\in S$, $\tau''\not\in S$.  Since $\tau'$ and $\tau''$
extend $\tau$, and $\tau$ extends $\sigma$, $\tau'$ and $\tau''$ extend 
$\sigma$; and since $\sigma\in S_{\beta-1}$ (by induction), this shows
$\sigma\in S_\beta$.
\end{proof}

\begin{lemma}
\label{lemma7}
Suppose $f:\N\to\N$, $f\not\in [S_\infty]$.
There is some $i$ such that for all $j\geq i$,
$f\rest j\not\in S_\infty$ and $\beta(f\rest j)=\beta(f\rest i)$.
Furthermore, $f\in [S_{\beta(f\rest i)-1}]$.
\end{lemma}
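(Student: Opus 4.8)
The plan is to pass to a tail of $f$ along which $\beta(\cdot)$ has stabilized, using well-foundedness of the ordinals, and then read off the final clause directly from the definition of $\beta$. Since $f\notin[S_\infty]$, there is some $k$ with $f\rest k\notin S_\infty$; by the (contrapositive of the) first assertion of Lemma~\ref{lemma6}, every extension of $f\rest k$ is outside $S_\infty$, so $f\rest j\notin S_\infty$ and $\beta(f\rest j)$ is defined for all $j\geq k$. The second assertion of Lemma~\ref{lemma6} then gives $\beta(f\rest(j+1))\leq\beta(f\rest j)$ for all $j\geq k$, so $(\beta(f\rest j))_{j\geq k}$ is a non-increasing sequence of ordinals and hence eventually constant: there is $i\geq k$ with $\beta(f\rest j)=\beta(f\rest i)$ for all $j\geq i$. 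This $i$ witnesses the first part of the lemma.

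For the ``furthermore'' clause, set $\beta=\beta(f\rest i)$, a successor ordinal by the remark after Definition~\ref{betadefn}, so that $\beta-1$ is meaningful; I claim $f\rest j\in S_{\beta-1}$ for \emph{every} $j$. For $j\geq i$ we have $\beta(f\rest j)=\beta$, and since $\beta(f\rest j)$ is by definition the least ordinal $\gamma$ with $f\rest j\notin S_\gamma$, this forces $f\rest j\in S_{\gamma}$ for all $\gamma<\beta$, in particular $f\rest j\in S_{\beta-1}$. For $j<i$ there are two cases: if $f\rest j\in S_\infty$ then $f\rest j\in S_\infty\subseteq S_{\beta-1}$; otherwise $\beta(f\rest j)$ is defined and Lemma~\ref{lemma6} gives $\beta\leq\beta(f\rest j)$, whence $\beta-1<\beta(f\rest j)$ and again $f\rest j\in S_{\beta-1}$. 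Thus every finite initial segment of $f$ lies in $S_{\beta-1}$, i.e. $f\in[S_{\beta-1}]=[S_{\beta(f\rest i)-1}]$.

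There is no real obstacle here: the one point requiring care is that $\beta(\cdot)$ need not be defined on the short initial segments $f\rest j$ with $j<i$ (such a $f\rest j$ could lie in $S_\infty$), which is why the argument for the final clause splits into cases; the conceptual content is just the monotonicity recorded in Lemma~\ref{lemma6} combined with well-foundedness of the ordinals.
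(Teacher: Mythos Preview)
Your proof is correct and follows essentially the same approach as the paper's: both use Lemma~\ref{lemma6} and well-foundedness of the ordinals for the first part, and then verify $f\rest j\in S_{\beta(f\rest i)-1}$ separately for $j\geq i$ (via the definition of $\beta$) and $j<i$ (via monotonicity). The only cosmetic difference is that for $j<i$ the paper applies the monotonicity of Lemma~\ref{lemma6} directly to the fact $f\rest i\in S_{\beta(f\rest i)-1}$, whereas you split into the subcases $f\rest j\in S_\infty$ and $f\rest j\notin S_\infty$; your extra care here is harmless and arguably clearer.
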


\begin{proof}
The first part follows from Lemma \ref{lemma6} and the well-foundedness of 
$\mathrm{Ord}$.
For the second part we must show $f\rest k\in S_{\beta(f\rest i)-1}$
for every $k$.
If $k\leq i$,
then $f\rest k\in S_{\beta(f\rest i)-1}$ by Lemma \ref{lemma6}.
If $k\geq i$, then $\beta(f\rest k)=\beta(f\rest i)$ and so
$f\rest k\in S_{\beta(f\rest i)-1}$ since it is in
$S_{\beta(f\rest k)-1}$ by definition of $\beta$.
\end{proof}

\begin{definition}
\label{gsubsdefn}
If $S_\infty=\emptyset$
then we define $G_S:\N^{<\N}\to\{0,1\}$
as follows.
Let $\sigma\in\N^{<\N}$.
Since $S_\infty=\emptyset$, $\sigma\not\in S_\infty$,
so $\sigma\in S_{\beta(\sigma)-1}\backslash S_{\beta(\sigma)}$.
Since $\sigma\not\in S_{\beta(\sigma)}$,
this means for every two extensions $x',x''$ of $\sigma$
in $[S_{\beta(\sigma)-1}]$, either $x',x''\in S$ or $x',x''\in S^c$.
So either all extensions of $\sigma$ in $[S_{\beta(\sigma)-1}]$
are in $S$, or all such extensions are in $S^c$.
\begin{enumerate}
\item[(i)] If there are no extensions of $\sigma$ in 
$[S_{\beta(\sigma)-1}]$,
and $\mathrm{length}(\sigma)>0$,
then let $G_S(\sigma)=G_S(\sigma^-)$
where $\sigma^-$ is obtained from $\sigma$ by removing the last term.
\item[(ii)]
If there are no extensions of $\sigma$ in $[S_{\beta(\sigma)-1}]$,
and $\mathrm{length}(\sigma)=0$,
let $G_S(\sigma)=0$.
\item[(iii)] If there are extensions of $\sigma$ in $[S_{\beta(\sigma)-1}]$
and they are all in $S$, define $G_S(\sigma)=1$.
\item[(iv)] If there are extensions of $\sigma$ in $[S_{\beta(\sigma)-1}]$
and they are all in $S^c$, define $G_S(\sigma)=0$.
\end{enumerate}
\end{definition}

\begin{proposition}
\label{proposition8}
If $S_\infty=\emptyset$ then $G_S$ guesses $S$.
\end{proposition}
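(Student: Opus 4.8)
The plan is to fix an arbitrary $f\in\N^\N$ and show that $\lim_{n\to\infty}G_S(f\rest n)=\chi_S(f)$; since $G_S$ is total by construction, this is all that Definition \ref{guessabledefn} requires. Because $S_\infty=\emptyset$ we have $[S_\infty]=\emptyset$, so in particular $f\notin[S_\infty]$ and Lemma \ref{lemma7} applies. It furnishes an index $i$ and an ordinal $\beta_0:=\beta(f\rest i)$ such that $\beta(f\rest j)=\beta_0$ for every $j\geq i$, and moreover $f\in[S_{\beta_0-1}]$, i.e.\ every initial segment of $f$ lies in $S_{\beta_0-1}$. (Recall from the remark after Definition \ref{betadefn} that $\beta_0$ is a successor, so $\beta_0-1$ is meaningful.)

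Next I would compute $G_S(f\rest j)$ for $j\geq i$. For such $j$ we have $f\rest j\in S_{\beta_0-1}\setminus S_{\beta_0}$ and $\beta(f\rest j)=\beta_0$, so $G_S(f\rest j)$ is governed by the case split in Definition \ref{gsubsdefn} with $\beta(\sigma)=\beta_0$. The key observation is that $f$ itself is an extension of $f\rest j$ lying in $[S_{\beta_0-1}]$; hence clauses (i) and (ii) are vacuous here and we are necessarily in clause (iii) or (iv). By the dichotomy recorded inside Definition \ref{gsubsdefn}, all extensions of $f\rest j$ in $[S_{\beta_0-1}]$ share the same $S$-membership, and since $f$ is one of them this common status is ``in $S$'' exactly when $f\in S$. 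Therefore $G_S(f\rest j)=1$ if $f\in S$ and $G_S(f\rest j)=0$ if $f\notin S$; in either case $G_S(f\rest j)=\chi_S(f)$.

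Finally, since $G_S(f\rest j)=\chi_S(f)$ for all $j\geq i$, the limit $\lim_{n\to\infty}G_S(f\rest n)$ exists and equals $\chi_S(f)$; as $f$ was arbitrary, $G_S$ guesses $S$. I do not anticipate a serious obstacle, since the substantive work has been front-loaded into Lemmas \ref{lemma6} and \ref{lemma7} and into the dichotomy verified inside Definition \ref{gsubsdefn}. The one point deserving care is confirming that the clause of Definition \ref{gsubsdefn} triggered along the tail of $f$ is (iii) or (iv) rather than (i) or (ii); this is precisely where the ``furthermore $f\in[S_{\beta(f\rest i)-1}]$'' part of Lemma \ref{lemma7} is used, guaranteeing that $f$ is available as an explicit witnessing extension. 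It is worth remarking, too, that clauses (i)--(ii) serve only to make $G_S$ total and play no role in the convergence argument.
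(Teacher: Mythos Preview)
Your proposal is correct and follows essentially the same route as the paper's proof: invoke Lemma~\ref{lemma7} to obtain the stabilizing index $i$ and the fact $f\in[S_{\beta(f\rest i)-1}]$, then observe that $f$ itself witnesses that clause (iii) or (iv) of Definition~\ref{gsubsdefn} applies at every $f\rest j$ with $j\geq i$, yielding $G_S(f\rest j)=\chi_S(f)$. The only cosmetic difference is that the paper treats the case $f\in S$ explicitly and dismisses $f\notin S$ by symmetry, whereas you handle both cases uniformly.
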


\begin{proof}
Assume $S_\infty=\emptyset$.
%
%
Let $f\in S$.
I will show $G_S(f\rest n)\to 1$ as $n\to\infty$.
Since $f\not\in [S_\infty]$, let $i$ be as in Lemma \ref{lemma7}.
I claim $G_S(f\rest j)=1$ whenever $j\geq i$.
Fix $j\geq i$.  We have $\beta(f\rest j)=\beta(f\rest i)$
by choice of $i$, and $f\in [S_{\beta(f\rest i)-1}]
=[S_{\beta(f\rest j)-1}]$.
%
%
Since $f\rest j$ has one extension (namely $f$ itself) in 
both $[S_{\beta(f\rest j)-1}]$ and $S$,
$G_S(f\rest j)=1$.

Identical reasoning shows that if $f\not\in S$ then 
$\lim_{n\to\infty}G_S(f\rest n)=0$.
\end{proof}

\begin{theorem}
\label{theorem9}
$S\in\bm{\Delta}^0_2$ if and only if $S_\infty=\emptyset$.
That is, Theorem \ref{wadgehausdorff} is true.
\end{theorem}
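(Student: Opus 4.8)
The plan is to assemble Theorem~\ref{theorem9} purely from the pieces already in hand, chaining two equivalences. First, recall that Theorem~\ref{delta2isguessable} (Wadge) gives $S\in\bm{\Delta}^0_2$ if and only if $S$ is guessable. So it suffices to prove that $S$ is guessable if and only if $S_\infty=\emptyset$. The forward direction is exactly Proposition~\ref{proposition4}: if $S$ is guessable, then $S_\infty=\emptyset$. The reverse direction is exactly Proposition~\ref{proposition8}: if $S_\infty=\emptyset$, then $G_S$ (from Definition~\ref{gsubsdefn}) guesses $S$, so $S$ is guessable. Combining, $S\in\bm{\Delta}^0_2\iff S$ guessable $\iff S_\infty=\emptyset$.

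Then I would close the loop back to Wadge's original formulation. By Lemma~\ref{simplifier}, $\Rm_\alpha(S)=[S_\alpha]$ for every ordinal $\alpha$. As noted in the remarks following that lemma, if $[S_\alpha]=\emptyset$ then $S_{\alpha+1}=\emptyset$ (no finite sequence in $S_\alpha$ can be extended to a branch of $[S_\alpha]$, so the defining condition for membership in $S_{\alpha+1}$ fails everywhere), and conversely $S_{\alpha+1}\subseteq S_\alpha$ forces $[S_{\alpha+1}]\subseteq[S_\alpha]$; iterating to the stabilization ordinal one gets $\Rm_\Omega(S)=\emptyset\iff S_\infty=\emptyset$. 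Hence $S\in\bm{\Delta}^0_2\iff\Rm_\Omega(S)=\emptyset$, which is the statement of Theorem~\ref{wadgehausdorff}.

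Since every ingredient has been established, the proof is essentially a one-line citation chain; there is no real obstacle. The only place warranting a sentence of care is the passage between ``$S_\infty=\emptyset$'' and ``$\Rm_\Omega(S)=\emptyset$,'' i.e.\ making explicit why the distinction flagged after Lemma~\ref{simplifier} (that $S_\alpha\neq\emptyset$ while $[S_\alpha]=\emptyset$ is a priori possible) does not obstruct the equivalence at the stabilization stage. I would dispatch this exactly as in those remarks: $[S_\alpha]=\emptyset\Rightarrow S_{\alpha+1}=\emptyset$, so once $[S_\alpha]$ empties out, $S$ itself empties one step later, and both remainders are then identically $\emptyset$ from that point on; thus they vanish for the same sets. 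With that observation in place, the theorem follows immediately from Theorem~\ref{delta2isguessable}, Proposition~\ref{proposition4}, Proposition~\ref{proposition8}, and Lemma~\ref{simplifier}.
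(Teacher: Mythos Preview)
Your proposal is correct and matches the paper's own proof essentially verbatim: the paper simply writes ``By combining Propositions~\ref{proposition4} and~\ref{proposition8} and Theorem~\ref{delta2isguessable},'' which is exactly the citation chain you give. Your additional paragraph reconnecting $S_\infty=\emptyset$ to $\Rm_\Omega(S)=\emptyset$ via Lemma~\ref{simplifier} is also correct and mirrors the remarks the paper already placed after that lemma, so it is faithful extra detail rather than a different approach.
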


\begin{proof}
By combining Propositions \ref{proposition4} and
\ref{proposition8} and Theorem \ref{delta2isguessable}.
\end{proof}

\section{Guessing without changing one's Mind too often}
\label{secondsection}

In this section our goal is to tease out additional information
about $\bm{\Delta}^0_2$ from the operation defined in Definition
\ref{myderiv}.

\begin{definition}
\label{mindchangedefn}
For each function $G$ with domain $\N^{<\N}$,
if $G(f\rest (n+1))\not= G(f\rest n)$ ($f\in\N^\N$, $n\in\N$),
we say $G$ \emph{changes its mind on $f\rest (n+1)$}.
Now let $\alpha\in\mathrm{Ord}$.
We say $S$ is \emph{guessable with $<\alpha$ mind changes}
if there is an
$S$-guesser $G$
along with a function $H:\N^{<\N}\to\alpha$ such that
the following hold, where $f\in\N^\N$ and $n\in\N$.
\begin{enumerate}
\item[(i)] $H(f\rest (n+1)) \leq H(f\rest n)$.
\item[(ii)] If $G$ changes its mind on $f\rest (n+1)$, then $H(f\rest 
(n+1))<H(f\rest n)$.
\end{enumerate}
\end{definition}

This notion bears some resemblance to the notion of a set $Z\subseteq\N$
being $f$-c.e.~in \cite{figueira}, or $g$-c.a.~in \cite{nies}.

\begin{theorem}
\label{mindchangetheorem}
For $\alpha\in\mathrm{Ord}$, $S$ is guessable with $<\alpha$ mind changes 
if and only if $S_{\alpha}=\emptyset$.
\end{theorem}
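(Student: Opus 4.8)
The plan is to prove both directions by transfinite induction on $\alpha$, exploiting the close relationship between the ordinal $\beta(\sigma)$ from Definition~\ref{betadefn} and the mind-change bookkeeping function $H$.

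For the direction $S_\alpha=\emptyset \Rightarrow S$ guessable with $<\alpha$ mind changes, I would use the guesser $G_S$ constructed in Definition~\ref{gsubsdefn} together with the natural candidate $H(\sigma) = \beta(\sigma)-1$ (or perhaps $\beta(\sigma)$ shifted appropriately), noting that $S_\alpha=\emptyset$ means every $\sigma$ satisfies $\beta(\sigma)\le\alpha$, so $H$ does land in $\alpha$ after the shift. Lemma~\ref{lemma6} already gives $\beta(\tau)\le\beta(\sigma)$ for $\sigma\subseteq\tau$, which is exactly the monotonicity required by clause~(i). The crux is clause~(ii): I must show that whenever $G_S$ changes its mind from $f\rest n$ to $f\rest(n+1)$, the value $\beta$ strictly drops. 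The idea is that if $\beta(f\rest(n+1))=\beta(f\rest n)=:\gamma$, then both initial segments are in $S_{\gamma-1}\setminus S_\gamma$; analyzing the four cases (i)--(iv) of Definition~\ref{gsubsdefn}, one checks that $f\rest n$ and $f\rest(n+1)$ land in the same case with the same value — for instance, if $f\rest(n+1)$ has an extension in $[S_{\gamma-1}]$ lying in $S$, then so does $f\rest n$ (the same extension works, since extensions of $f\rest(n+1)$ extend $f\rest n$), and the "no extension" cases propagate the parent's value by construction. This forces $G_S(f\rest(n+1))=G_S(f\rest n)$, so a mind change indeed requires $\beta$ to strictly decrease.

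For the converse, $S$ guessable with $<\alpha$ mind changes $\Rightarrow S_\alpha=\emptyset$, I would argue contrapositively and strengthen the statement to an inductive claim: for every ordinal $\beta$, if $\sigma\in S_\beta$ then for every $S$-guesser $G$ with companion $H$, we have $H(\sigma)\ge\beta$ (informally, membership in $S_\beta$ costs at least $\beta$ units of mind-change budget "below" $\sigma$). The zero and limit cases are immediate. For the successor step $\beta+1$: if $\sigma\in S_{\beta+1}$, there are $x',x''\in[S_\beta]$ extending $\sigma$ with $x'\in S$, $x''\notin S$; since $G$ guesses $S$, $\lim G(x'\rest n)=1$ and $\lim G(x''\rest n)=0$, so at some point past $\sigma$ along at least one of these branches $G$ must change its mind, strictly dropping $H$ below $H(\sigma)$; but the relevant initial segments lie in $S_\beta$, so by induction their $H$-values are $\ge\beta$, whence $H(\sigma)>\beta$, i.e. $H(\sigma)\ge\beta+1$. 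Applying this with $\sigma$ ranging over $S_\alpha$: if $S_\alpha\ne\emptyset$, pick $\sigma\in S_\alpha$; then $H(\sigma)\ge\alpha$, contradicting $H:\N^{<\N}\to\alpha$.

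The main obstacle I anticipate is the successor step of the converse — specifically, extracting a strict $H$-decrease along a branch while ensuring the segment at which it occurs is still deep enough in the $S_\beta$-hierarchy to invoke the inductive hypothesis. One must be careful that the mind change happens at a segment $\tau$ with $\sigma\subseteq\tau\subseteq x'$ (or $x''$) satisfying $\tau\in S_\beta$; since $x',x''\in[S_\beta]$ this holds for all their initial segments extending $\sigma$, so the bookkeeping works, but this is the point demanding the most care. A secondary subtlety is pinning down the exact shift in the definition of $H$ in the forward direction so that the codomain is genuinely $\alpha$ (not $\alpha+1$), using the fact — noted after Definition~\ref{betadefn} — that $\beta(\sigma)$ is always a successor ordinal, so $\beta(\sigma)-1$ is well-defined and $\beta(\sigma)\le\alpha$ gives $\beta(\sigma)-1<\alpha$.
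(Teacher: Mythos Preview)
Your proposal is correct and matches the paper's proof essentially line for line: the paper likewise uses $G_S$ together with $H(\sigma)=\beta(\sigma)-1$ for the $(\Leftarrow)$ direction (with the same case analysis of Definition~\ref{gsubsdefn} to show a mind change forces a strict drop in $\beta$), and for the $(\Rightarrow)$ direction proves the same inductive claim ``$\sigma\in S_\beta\Rightarrow H(\sigma)\geq\beta$'' by choosing whichever of $x',x''$ disagrees with $G(\sigma)$ and locating a $\sigma^+\in S_\beta$ along it where $G$ has changed value. One small framing note: neither direction is really an induction on $\alpha$ as your opening sentence suggests---the $(\Leftarrow)$ direction is a direct construction, and the $(\Rightarrow)$ direction inducts on the auxiliary ordinal $\beta$---but your actual argument reflects this correctly.
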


\begin{proof}

\item ($\Rightarrow$)
Assume $S$ is guessable with $<\alpha$ mind changes.
Let $G,H$ be as in Definition \ref{mindchangedefn}.
We claim that for all $\beta\in\mathrm{Ord}$,
if $\sigma\in S_{\beta}$ then
$H(\sigma)\geq\beta$.
This will prove ($\Rightarrow$)
because it implies
that if $S_\alpha\not=\emptyset$ then there is
some $\sigma$ with $H(\sigma)\geq\alpha$,
absurd since $\mathrm{codomain}(H)=\alpha$.

We attack the claim by induction on $\beta$.
The zero and limit cases are trivial.
Assume $\beta=\gamma+1$.
Suppose $\sigma\in S_{\gamma+1}$.
There are $x',x''\in [S_{\gamma}]$ extending
$\sigma$, $x'\in S$, $x''\not\in S$.
Pick $x\in\{x',x''\}$
so that $\chi_S(x)\not=G(\sigma)$
and pick $\sigma^+\in\N^{<\N}$ with $\sigma\subseteq \sigma^+\subseteq x$
such that $G(\sigma^+)=\chi_S(x)$ (some such $\sigma^+$ exists
since $G$ guesses $S$).
Since $x\in [S_{\gamma}]$, $\sigma^+\in S_{\gamma}$.
By induction, $H(\sigma^+)\geq \gamma$.
The fact $G(\sigma^+)\not= G(\sigma)$ implies
$H(\sigma^+)<H(\sigma)$, forcing $H(\sigma)\geq \gamma+1$.

\item
($\Leftarrow$)
Assume $S_\alpha=\emptyset$.
%
%
For all $\sigma\in\N^{<\N}$,
define $H(\sigma)=\beta(\sigma)-1$
(by definition of $\beta(\sigma)$, since $S_\alpha=\emptyset$,
clearly
$H(\sigma)\in \alpha$).
I claim $G_S,H$ witness that $S$ is guessable with $<\alpha$
mind changes.

By Proposition \ref{proposition8}, $G_S$ guesses $S$.
Let $f\in\N^\N$, $n\in\N$.
By Lemma \ref{lemma6},
$H(f\rest (n+1))\leq H(f\rest n)$.
Now suppose $G_S$ changes its mind on $f\rest(n+1)$,
we must show $H(f\rest (n+1))< H(f\rest n)$.
Assume, for sake of contradiction,
that $H(f\rest(n+1))=H(f\rest n)$.
Assume $G_S(f\rest n)=0$, the other case is similar.
By definition of $G_S$, ($*$) for every infinite extension $f'$ of $f\rest n$,
if $f'\in [S_{\beta(f\rest n)-1}]$ then
$f'\in S^c$.
Since $G_S$ changes its mind on $f\rest(n+1)$,
$G_S(f\rest(n+1))=1$.
Thus ($**$) for every infinite extension $f''$ of $f\rest(n+1)$,
if $f''\in [S_{\beta(f\rest(n+1))-1}]$
then $f''\in S$.
And $f\rest(n+1)$ does actually have some such infinite extension $f''$, because if it had none,
that would make $G_S(f\rest(n+1))=G_S(f\rest n)$ by case 1 of the definition of $G_S$ (Definition \ref{gsubsdefn}).
Being an extension of $f\rest(n+1)$, $f''$ also extends $f\rest n$; and
by the assumption that $H(f\rest(n+1))=H(f\rest n)$,
$f''\in [S_{\beta(f\rest n)-1}]$.
By ($*$), $f''\in S^c$, and by ($**$), $f''\in S$.
Absurd.
\end{proof}

It is not hard to show $S$ is a Boolean combination of open sets if and only if $S$ is guessable with $<\omega$ mind changes,
so
Theorem \ref{mindchangetheorem} and Lemma \ref{simplifier} give
a new proof of a special case
of the main theorem (p.~1348) of \cite{miller} (see also \cite{allouche}).

\section{Mind Changing and the Difference Hierarchy}
\label{diffhiersection}

We recall the following definition from \cite{kechris} (p.~175, stated in greater generality---we
specialize it to the Baire space).  In this definition, $\bm{\Sigma}^0_1(\N^\N)$
is the set of open subsets of $\N^\N$, and the \emph{parity} of an ordinal $\eta$ is the equivalence
class modulo $2$ of $n$, where $\eta=\lambda+n$, $\lambda$ a limit ordinal (or $\lambda=0$), $n\in\mathbb N$.

\begin{definition}
\label{hierarchydefn}
Let $(A_{\eta})_{\eta<\theta}$ be an increasing sequence of subsets of $\N^\N$
with $\theta\geq 1$.
Define the set $D_{\theta}((A_{\eta})_{\eta<\theta})\subseteq\N^\N$ by
\begin{quote}
$x\in D_{\theta}((A_{\eta})_{\eta<\theta})$ $\Leftrightarrow$ $\displaystyle x\in \bigcup_{\eta<\theta} A_{\eta}$
\& the least $\eta<\theta$ with $x\in A_{\eta}$ has parity opposite to that of $\theta$.
\end{quote}
Let
\[
D_{\theta}(\bm{\Sigma}^0_1)(\N^\N)=\{D_{\theta}((A_{\eta})_{\eta<\theta})\,:\, A_{\eta}\in \bm{\Sigma}^0_1(\N^\N),\,\eta<\theta\}.
\]
\end{definition}

This hierarchy offers a constructive characterization of $\bm{\Delta}^0_2$: it turns out that
\[\bm{\Delta}^0_2=\cup_{1\leq \theta<\omega_1} D_{\theta}(\bm{\Sigma}^0_1)(\N^\N)\]
(see Theorem 22.27 of \cite{kechris}, p.~176, attributed to Hausdorff and Kuratowski).

For brevity, we will write $D_{\alpha}$ for $D_{\alpha}(\bm{\Sigma}^0_1)(\N^\N)$.

\begin{theorem}
\label{hierarchymainthm}
(Semi-characterization of the difference hierarchy)
Let $\alpha>0$.
The following are equivalent.
\begin{enumerate}
\item[(i)] $S$ is guessable with $<\alpha+1$ mind changes.
\item[(ii)] $S\in D_{\alpha}$ or $S^c\in D_{\alpha}$.
\end{enumerate}
\end{theorem}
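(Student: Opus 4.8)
The plan is to prove both implications by relating the mind-changing/remainder picture to the difference hierarchy via the characterization $S_{\alpha+1}=\emptyset \iff S$ is guessable with $<\alpha+1$ mind changes (Theorem \ref{mindchangetheorem}), so that (i) is equivalent to $S_{\alpha+1}=\emptyset$. Thus it suffices to show $S_{\alpha+1}=\emptyset$ if and only if $S\in D_\alpha$ or $S^c\in D_\alpha$. The natural bridge is that $D_\alpha$ sets are built from an increasing length-$\alpha$ sequence of open sets, while the remainder $S_\beta$ stratifies $\N^{<\N}$ by ``how deep the ambiguity goes,'' and these two stratifications should be made to line up, with the parity of $\theta=\alpha$ in Definition \ref{hierarchydefn} corresponding to whether $S$ itself or $S^c$ is the one that lies in $D_\alpha$.

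For the direction (ii) $\Rightarrow$ (i): suppose $S=D_\alpha((A_\eta)_{\eta<\alpha})$ with each $A_\eta$ open and the sequence increasing (the case $S^c\in D_\alpha$ is symmetric, switching the roles of $1$ and $0$ in the guesser). I would build an explicit guesser $G$ and rank function $H:\N^{<\N}\to\alpha+1$ directly from the $A_\eta$. Given $\sigma\in\N^{<\N}$, let $[\sigma]$ meet $A_\eta$ means some extension of $\sigma$ lies in $A_\eta$; since the $A_\eta$ are open and increasing, as we extend $\sigma$ along a fixed $f$, the least $\eta$ with $[\,f\!\rest\! n\,]\subseteq A_\eta$ is non-increasing in $n$ and stabilizes (if $f\in\bigcup_\eta A_\eta$) at the least $\eta$ with $f\in A_\eta$. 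Set $G(\sigma)$ to be $1$ or $0$ according to the parity of that current least $\eta$ (relative to the parity of $\alpha$), with $G(\sigma)=$ the parity-value for ``$f\notin\bigcup A_\eta$'' when no $A_\eta$ yet captures $[\sigma]$; and set $H(\sigma)$ to be $\alpha$ minus (roughly) that current least $\eta$, capped appropriately so that the no-capture case gets the top value and $H$ lands in $\alpha+1$. Then $H$ is non-increasing along every branch, it strictly drops exactly when the current least $\eta$ drops, which is exactly when $G$ can change its mind; and $G(f\!\rest\! n)$ converges to $\chi_S(f)$ because the least captured $\eta$ stabilizes to the value that decides membership in $D_\alpha((A_\eta))$. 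A little care is needed at limit stages of $\alpha$ and to ensure $G$ changes its mind only when $H$ strictly decreases (as opposed to $H$ decreasing without a mind change, which is harmless), but the construction is otherwise routine.

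For the direction (i) $\Rightarrow$ (ii): assume $S_{\alpha+1}=\emptyset$. For each $\sigma\in\N^{<\N}$ we have $\beta(\sigma)\le\alpha+1$, so $\beta(\sigma)-1\le\alpha$; I would use this rank, together with the guesser $G_S$ of Definition \ref{gsubsdefn} and the dichotomy recorded there (all extensions of $\sigma$ in $[S_{\beta(\sigma)-1}]$ lie in $S$, or all lie in $S^c$), to define open sets $A_\eta$ ($\eta<\alpha$) realizing either $S$ or $S^c$ as a $D_\alpha$ set. Concretely, for $\eta<\alpha$ let $A_\eta$ be the union of all $[\sigma]$ such that $\beta(\sigma)-1 = \eta$ (or $\le\eta$, to make the sequence increasing) and $G_S(\sigma)$ has the ``decided'' value appropriate to $\eta$'s parity; the point is that along any branch $f$, Lemma \ref{lemma7} tells us the rank $\beta(f\!\rest\! n)-1$ stabilizes, and at that stable value $G_S$ has already committed to $\chi_S(f)$, so the least $\eta$ with $f\in A_\eta$ reads off $\chi_S(f)$ via parity. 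The parity of $\alpha$ then determines whether this presents $S$ or $S^c$; in the ``wrong parity'' case one simply observes that $S^c\in D_\alpha$ instead (using that $D_\alpha$ applied to the same open sets, but with the opposite parity convention, gives the complement up to a set that is itself lower in the hierarchy, or equivalently re-indexing).

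The main obstacle I expect is bookkeeping the parity and limit-ordinal edge cases so that the length of the open-set sequence is exactly $\alpha$ (not $\alpha+1$), matching $D_\alpha$ rather than $D_{\alpha+1}$. In the $<\alpha+1$ mind-changes hypothesis the rank $H$ takes $\alpha+1$ values $\{0,1,\dots\}$ up to $\alpha$, but one of those values (the top, corresponding to ``not yet decided'') should \emph{not} contribute an open set to the $D_\alpha$ presentation — it corresponds to ``$f\notin\bigcup_\eta A_\eta$,'' whose membership verdict is fixed by parity of $\theta=\alpha$. Getting this off-by-one exactly right, and correctly handling which of $S$, $S^c$ ends up in $D_\alpha$ depending on the parity of $\alpha$ and the verdict $G_S$ assigns to the undecided case (case (ii) of Definition \ref{gsubsdefn}, where $G_S=0$), is where the real work lies; everything else is a transcription of Lemma \ref{lemma7} and the $G_S$ dichotomy into the language of Definition \ref{hierarchydefn}.
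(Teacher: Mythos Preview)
Your direction (ii)$\Rightarrow$(i) is essentially the paper's Proposition~4.4, but with one slip: you propose $H(\sigma)=$ ``$\alpha$ minus (roughly) the current least $\eta$''. Ordinal subtraction is awkward and, more to the point, unnecessary: you yourself note that the least $\eta$ with $[f\rest n]\subseteq A_\eta$ is non-increasing in $n$, so the paper simply sets $H(\sigma)$ equal to that least $\eta$ (and $H(\sigma)=\alpha$ when no $A_\eta$ yet captures $[\sigma]$). That already lands in $\alpha+1$, is non-increasing along branches, and strictly drops exactly when $G$ changes.

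Your direction (i)$\Rightarrow$(ii) diverges from the paper and has a genuine gap. You want to use the canonical data $G_S$ and the rank $\beta(\sigma)-1$, and build $A_\eta$ from basic opens $[\sigma]$ with $\beta(\sigma)-1\le\eta$, possibly filtered by $G_S(\sigma)$. The problem is that the stable rank of $f$ (the eventual value of $\beta(f\rest n)-1$ from Lemma~\ref{lemma7}) need not have any parity correlation with $\chi_S(f)$: the rank can drop by arbitrary amounts along a branch, so two elements of $S$ can stabilize at ranks of opposite parity. Without a parity link between rank and guess, the least $\eta$ with $f\in A_\eta$ carries no information about $\chi_S(f)$, and stitching $G_S(\sigma)$ into the definition of $A_\eta$ (as you sketch) threatens both monotonicity of $(A_\eta)$ and the identification of the least $\eta$.

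The paper's fix is exactly the device you are missing: starting from \emph{any} witnessing pair $(G,H)$, it first replaces $H$ by an $H'$ that changes parity precisely when $G$ changes its value (Lemma~\ref{Htechlemma}), and then adjusts $H'(\emptyset)$ so that the global parity alignment comes out right (Lemma~\ref{coheringguessers}). After this normalization one can take the clean definition $A_\eta=\{f:H'(f)\le\eta\}$; these sets are open and increasing, the least $\eta$ containing $f$ is $H'(f)$, and by construction $H'(f)\bmod 2$ determines $G(f)=\chi_S(f)$. The dichotomy ``$S\in D_\alpha$ or $S^c\in D_\alpha$'' then falls out of whether $G(\emptyset)=0$ or $1$. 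You correctly flagged parity bookkeeping as ``where the real work lies''; the missing idea is that the bookkeeping is done by modifying the rank function, not by complicating the open sets.
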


We will prove Theorem \ref{hierarchymainthm} by a sequence of smaller results.

\begin{definition}
\label{utilitydefn}
For $\alpha,\beta\in\mathrm{Ord}$, write $\alpha\equiv\beta$ to indicate that $\alpha$
and $\beta$ have the same parity (that is, $2|n-m$, where $\alpha=\lambda+n$
and $\beta=\kappa+m$, $n,m\in\N$, $\lambda$ a limit ordinal or $0$,
$\kappa$ a limit ordinal or $0$).
\end{definition}

\begin{proposition}
\label{dalphaisguessable}
Let $\alpha>0$.
If $S\in D_{\alpha}$, say $S=D_{\alpha}((A_\eta)_{\eta<\alpha})$ ($A_\eta\subseteq\N^\N$ open),
then $S$ is guessable with $<\alpha+1$ mind changes.
\end{proposition}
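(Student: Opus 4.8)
The plan is to decode the structure of $S=D_{\alpha}((A_\eta)_{\eta<\alpha})$ node by node. Since each $A_\eta$ is open, set $T_\eta=\{\sigma\in\N^{<\N}\,:\,[\sigma]\subseteq A_\eta\}$; this family is closed under passing to extensions, and $A_\eta=\bigcup_{\sigma\in T_\eta}[\sigma]$. For $\sigma\in\N^{<\N}$ let $\ell(\sigma)$ be the least $\eta<\alpha$ with $\sigma\in T_\eta$, if such an $\eta$ exists, and leave $\ell(\sigma)$ undefined otherwise. Because each $T_\eta$ is extension-closed, if $\sigma\subseteq\tau$ and $\ell(\sigma)$ is defined then $\ell(\tau)$ is defined and $\ell(\tau)\le\ell(\sigma)$. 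Consequently, for any $f\in\N^\N$ the values $\ell(f\rest n)$ are either undefined for all $n$ (exactly when $f\notin\bigcup_{\eta}A_\eta$) or undefined up to some point and non-increasing thereafter, hence eventually equal to a constant value; one checks directly from the definitions of $T_\eta$ that this constant is precisely the least $\eta$ with $f\in A_\eta$.

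Next I would define the guesser and the counter directly from $\ell$. Put $G(\sigma)=1$ if $\ell(\sigma)$ is defined and $\ell(\sigma)\not\equiv\alpha$ (in the notation of Definition \ref{utilitydefn}), and $G(\sigma)=0$ otherwise; and put $H(\sigma)=\ell(\sigma)$ if $\ell(\sigma)$ is defined and $H(\sigma)=\alpha$ otherwise, so that $H:\N^{<\N}\to\alpha+1$. That $G$ guesses $S$ is immediate from the previous paragraph together with Definition \ref{hierarchydefn}: if $f\notin\bigcup_\eta A_\eta$ then $G(f\rest n)=0$ for all $n$ and $f\notin S$; and if $f\in A_\eta$ for some $\eta$, with least such index $\eta_0$, then $G(f\rest n)$ is eventually equal to $1$ exactly when $\eta_0$ has parity opposite to $\alpha$, i.e.\ exactly when $f\in S$. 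Condition (i) of Definition \ref{mindchangedefn} for $H$ is the monotonicity of $\ell$ along branches noted above, together with the fact that $H(f\rest n)=\alpha$ dominates everything when $\ell(f\rest n)$ is undefined.

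The crux is condition (ii): a mind change of $G$ on $f\rest(n+1)$ must be paid for by a strict drop of $H$. Suppose $G(f\rest n)\ne G(f\rest(n+1))$. If $\ell(f\rest n)$ is undefined then $H(f\rest n)=\alpha$, and since $G$ changed we must have $G(f\rest(n+1))=1$, so $\ell(f\rest(n+1))$ is defined and $H(f\rest(n+1))=\ell(f\rest(n+1))<\alpha$. If instead $\ell(f\rest n)$ is defined, say equal to $\eta$, then $\ell(f\rest(n+1))$ is also defined, say equal to $\eta'\le\eta$; the change of $G$ forces $\eta$ and $\eta'$ to lie on opposite sides of the parity of $\alpha$, so in particular $\eta'\ne\eta$ and hence $\eta'<\eta$, i.e.\ $H(f\rest(n+1))<H(f\rest n)$. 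This exhausts the cases, so $G,H$ witness that $S$ is guessable with $<\alpha+1$ mind changes. I expect the only genuine care needed to be in bookkeeping the ``undefined'' branch of $\ell$ consistently, and in noticing that it is exactly this branch that forces the codomain of $H$ to be $\alpha+1$ rather than $\alpha$ (the very first mind change, out of the constant value $0$, must be charged against the top value $\alpha$); the parity argument for the strict drop is then a one-line observation.
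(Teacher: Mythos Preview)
Your proof is correct and is essentially identical to the paper's: your $\ell(\sigma)$ is exactly the minimal $\eta$ with $[\sigma]\subseteq A_\eta$ that the paper uses, and your $G,H$ coincide with the paper's definitions, as does the parity argument for the strict drop of $H$. The only difference is cosmetic organization (you package things via $T_\eta$ and $\ell$, the paper splits into three labeled Claims).
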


\begin{proof}
Define $G:\N^{<\N}\to\{0,1\}$ and $H:\N^{<\N}\to \alpha+1$ as follows.
Suppose $\sigma\in\N^{<\N}$.
If there is no $\eta<\alpha$ such that $[\sigma]\subseteq A_{\eta}$,
let $G(\sigma)=0$ and let $H(\sigma)=\alpha$.
If there is an $\eta<\alpha$ (we may take $\eta$ minimal) such that $[\sigma]\subseteq A_\eta$,
then let
\begin{align*}
G(\sigma) = \left\{\begin{array}{lr}
\mbox{$0$,} & \mbox{if $\eta\equiv\alpha$;}\\
\mbox{$1$,} & \mbox{if $\eta\not\equiv\alpha$,}
\end{array}\right.
&{}
&
H(\sigma) = \eta.
\end{align*}
Let $f:\N\to\N$.

\item
\claim{1}
$\lim_{n\to\infty} G(f\rest n)=\chi_S(f)$.

If $f\not\in\cup_{\eta<\alpha}A_{\eta}$, then $f\not\in D_\alpha((A_\eta)_{\eta<\alpha})=S$,
and $G(f\rest n)$ will always be $0$, so $\lim_{n\to\infty} G(f\rest n)=0=\chi_S(f)$.
Assume $f\in \cup_{\eta<\alpha}A_\eta$, and let $\eta<\alpha$ be minimum such that $f\in A_\eta$.
Since $A_\eta$ is open, there is some $n_0$ so large that $\forall n\geq n_0$, $[f\rest n]\subseteq A_\eta$.
For all $n\geq n_0$, by minimality of $\eta$, $[f\rest n]\not\subseteq A_{\eta'}$ for any $\eta'<\eta$,
so $G(f\rest n)=0$ if and only if $\eta\equiv\alpha$.
The following are equivalent.
\begin{align*}
f\in S &\mbox{ iff } f\in D_\alpha((A_\eta)_{\eta<\alpha})\\
&\mbox{ iff } \eta\not\equiv\alpha\\
&\mbox{ iff } G(f\rest n)\not=0\\
&\mbox{ iff } G(f\rest n)=1.
\end{align*}
This shows $\lim_{n\to\infty} G(f\rest n)=\chi_S(f)$.

\item
\claim{2}
$\forall n\in\N$, $H(f\rest (n+1))\leq H(f\rest n)$.

If $H(f\rest n)=\alpha$, there is nothing to prove.
If $H(f\rest n)<\alpha$, then $H(f\rest n)=\eta$ where $\eta$
is minimal such that $[f\rest n]\subseteq A_\eta$.
Since $[f\rest (n+1)]\subseteq [f\rest n]$, we have
$[f\rest (n+1)]\subseteq A_\eta$, implying $H(f\rest (n+1))\leq \eta$.

\item
\claim{3}
$\forall n\in\N$, if $G(f\rest (n+1))\not=G(f\rest n)$,
then $H(f\rest (n+1))<H(f\rest n)$.

Assume (for sake of contradiction) $H(f\rest (n+1))\geq H(f\rest n)$.
By Claim 2, $H(f\rest (n+1))=H(f\rest n)$.
By definition of $H$ this implies that $\forall \eta<\alpha$, $[f\rest (n+1)]\subseteq A_\eta$
if and only if $[f\rest n]\subseteq A_\eta$.
This implies $G(f\rest (n+1))=G(f\rest n)$, contradiction.

\item
By Claims 1--3, $G$ and $H$ witness that $S$ is guessable with $<\alpha+1$ mind changes.
\end{proof}

\begin{corollary}
\label{dalphaisguessablecorollary}
Let $\alpha>0$.  If $S\in D_{\alpha}$ or $S^c\in D_\alpha$ then $S$ is guessable with $<\alpha+1$ mind changes.
\end{corollary}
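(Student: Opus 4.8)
The plan is to reduce everything to Proposition \ref{dalphaisguessable} together with one easy observation: guessability with $<\alpha+1$ mind changes is preserved under complementation. If $S\in D_\alpha$, there is nothing to do --- Proposition \ref{dalphaisguessable} applies verbatim. So the only real content is the case $S^c\in D_\alpha$.

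First I would record the symmetry lemma informally: suppose $G:\N^{<\N}\to\{0,1\}$ and $H:\N^{<\N}\to\alpha+1$ witness that $S^c$ is guessable with $<\alpha+1$ mind changes. Define $G'(\sigma)=1-G(\sigma)$ for all $\sigma\in\N^{<\N}$, and keep the same $H$. Then for every $f\in\N^\N$, $\lim_n G'(f\rest n)=1-\lim_n G(f\rest n)=1-\chi_{S^c}(f)=\chi_S(f)$, so $G'$ guesses $S$. Moreover $G'$ changes its mind on $f\rest(n+1)$ if and only if $G$ does (since $G'(f\rest(n+1))\neq G'(f\rest n)$ iff $G(f\rest(n+1))\neq G(f\rest n)$), so conditions (i) and (ii) of Definition \ref{mindchangedefn} hold for $(G',H)$ because they hold for $(G,H)$. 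Hence $S$ is guessable with $<\alpha+1$ mind changes.

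Then I would assemble the proof of the corollary in two lines: if $S\in D_\alpha$ apply Proposition \ref{dalphaisguessable} to $S$; if $S^c\in D_\alpha$ apply Proposition \ref{dalphaisguessable} to $S^c$ to get a witness $(G,H)$ for $S^c$, and then pass to $(1-G,H)$ as above to get a witness for $S$.

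I do not expect any genuine obstacle here --- the corollary is essentially a packaging statement. The only point worth stating carefully is that flipping the guesser's Boolean output does not affect \emph{where} mind changes occur, so the mind-change counter $H$ can be reused unchanged; everything else is immediate from Proposition \ref{dalphaisguessable}.
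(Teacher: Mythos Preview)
Your proposal is correct and matches the paper's own proof: the paper handles $S\in D_\alpha$ by direct appeal to Proposition~\ref{dalphaisguessable}, and for $S^c\in D_\alpha$ it simply notes that guessability of $S^c$ with $<\alpha+1$ mind changes ``clearly implies'' the same for $S$. Your explicit construction of the witness $(1-G,H)$ is exactly the content of that ``clearly,'' and indeed the paper uses the same $(1-G,H)$ trick later in Corollary~\ref{rightarrowcorollary}.
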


\begin{proof}
If $S\in D_\alpha$ this is immediate by Proposition \ref{dalphaisguessable}.
If $S^c\in D_\alpha$ then Proposition \ref{dalphaisguessable} says $S^c$ is guessable with $<\alpha+1$
mind changes, and this clearly implies that $S$ is too.
\end{proof}

\begin{lemma}
\label{Htechlemma}
Suppose $S$ is guessable with $<\alpha$ mind changes.
Let $G:\N^{<\N}\to\{0,1\}$, $H:\N^{<\N}\to\alpha$ be a pair of functions witnessing as much (Definition \ref{mindchangedefn}).
There is an $H':\N^{<\N}\to\alpha$ such that
$G,H'$ also witness that $S$ is guessable with $<\alpha$ mind changes,
with $H'(\emptyset)=H(\emptyset)$,
and
with the additional property
that for every $f:\N\to\N$ and every $n\in\N$,
\begin{quote}
$H(f\rest (n+1))\equiv H(f\rest n)$ if and only if $G(f\rest (n+1))=G(f\rest n)$.
\end{quote}
\end{lemma}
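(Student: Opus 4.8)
The plan is to repair $H$ greedily along each branch so that the "mind-change" drops in $H$ happen exactly when $G$ changes its mind, while the value at the root is preserved. The key observation is that condition (ii) of Definition \ref{mindchangedefn} only forces a \emph{strict} decrease in $H$ when $G$ changes its mind; when $G$ does not change its mind, $H$ is still allowed to drop, and it is precisely these "spurious" drops (and also "drops that change parity without $G$ changing") that we want to eliminate. So the idea is to define $H'$ recursively on $\N^{<\N}$ by $H'(\emptyset)=H(\emptyset)$ and, given $H'(\sigma)$ already defined,
\[
H'(\sigma^\frown k)=\begin{cases} H'(\sigma) & \text{if } G(\sigma^\frown k)=G(\sigma),\\ \text{some ordinal} < H'(\sigma) \text{ with parity} \equiv H(\sigma^\frown k) & \text{if } G(\sigma^\frown k)\neq G(\sigma).\end{cases}
\]
The subtlety is choosing that "some ordinal" so that $H'$ stays below $\alpha$ and so that future drops remain possible; I would take it to be the ordinal obtained from $H(\sigma^\frown k)$ by adjusting parity to match (e.g., $H(\sigma^\frown k)$ itself if it already has the right parity relative to $H'(\sigma)$'s requirement, otherwise $H(\sigma^\frown k)+1$ if that is still strictly below $H'(\sigma)$), but a cleaner route is to carry along the invariant $H'(\sigma)\le H(\sigma)$ and $H'(\sigma)\equiv H(\sigma)$, which lets me reuse $H(\sigma^\frown k)$ or $H(\sigma^\frown k)+1$ directly.

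First I would set up the recursion and verify by induction along each branch the two invariants: (a) $H'(f\rest n)\le H(f\rest n)$, so in particular $H'(f\rest n)\in\alpha$; and (b) $H'(f\rest n)\equiv H(f\rest n)$. Invariant (a) guarantees the codomain condition; it holds at the root by $H'(\emptyset)=H(\emptyset)$, is preserved in the "no change" case since $H'$ stays put while $H$ is nonincreasing (Definition \ref{mindchangedefn}(i)), and in the "change" case we must check the chosen value is $\le H(\sigma^\frown k)$ — this is where the parity bookkeeping matters, and I expect to need the fact that when $G$ changes its mind $H(\sigma^\frown k)<H(\sigma)$, giving enough room. Then I would check $G,H'$ satisfy Definition \ref{mindchangedefn}: condition (i), $H'(f\rest(n+1))\le H'(f\rest n)$, is immediate from the recursion (equality in the no-change case, strict drop by construction in the change case); condition (ii) is also immediate since in the change case we explicitly made $H'$ strictly decrease.

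Finally I would establish the displayed biconditional. The direction "$G(f\rest(n+1))\ne G(f\rest n)\Rightarrow H'(f\rest(n+1))\not\equiv H'(f\rest n)$" should follow by arranging, in the "change" branch of the recursion, that the chosen ordinal has parity \emph{opposite} to $H'(\sigma)$ — so I will actually want to define the change-case value to flip parity relative to $H'(\sigma)$ rather than to match $H(\sigma^\frown k)$, and then drop invariant (b) in favor of just invariant (a) plus "parity of $H'(f\rest n)$ is controlled by the parity of the number of mind-changes of $G$ along $f$ up to $n$." The direction "$G(f\rest(n+1))=G(f\rest n)\Rightarrow H'(f\rest(n+1))\equiv H'(f\rest n)$" is then trivial because $H'$ is literally constant on no-change steps. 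The main obstacle, and the only place any real care is needed, is the change case of the recursion: I must exhibit an ordinal that is simultaneously strictly below $H'(\sigma)$, still an element of $\alpha$, and of the prescribed parity — and I must confirm such an ordinal always exists. The slickest way around this is to observe that "$<\alpha$ mind changes" is equivalent to the existence of \emph{any} $H$ into $\alpha$ satisfying (i)–(ii), so I may first pass to an $H$ of the form $\sigma\mapsto$ (ordinal rank in a well-founded tree of mind-changes), after which every relevant ordinal below $H(\sigma)$ of either parity is available because the rank function takes all values below its suprema densely enough; alternatively, I note that if no ordinal of the needed parity strictly below $H'(\sigma)$ existed, then $H'(\sigma)$ would be a limit of the wrong cofinal parity, which cannot happen since its two predecessors differ by a finite step. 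I will present the direct recursion with the parity-flip choice and verify existence of the witness ordinal by this finite-gap argument.
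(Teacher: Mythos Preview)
Your overall plan is the paper's: define $H'$ recursively with $H'(\emptyset)=H(\emptyset)$, freeze $H'$ when $G$ does not change, and when $G$ does change set $H'(\sigma)$ to whichever of $H(\sigma)$ or $H(\sigma)+1$ has parity opposite to $H'(\sigma_0)$ (where $\sigma=\sigma_0\frown k$). The parity biconditional and condition~(i) of Definition~\ref{mindchangedefn} then fall out immediately.

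The gap is your invariant~(a). You assert $H'(f\rest n)\le H(f\rest n)$ and say this ``is preserved in the `no change' case since $H'$ stays put while $H$ is nonincreasing.'' That reasoning is backwards: if $H'$ is held fixed at $H'(\sigma_0)$ while $H$ drops to some $H(\sigma)\le H(\sigma_0)$, you only learn $H'(\sigma)=H'(\sigma_0)\le H(\sigma_0)$, which says nothing about $H'(\sigma)$ versus $H(\sigma)$. The invariant that actually survives the recursion, and the one the paper proves, is the \emph{reverse} inequality $H(\sigma)\le H'(\sigma)$: in the no-change step $H(\sigma)\le H(\sigma_0)\le H'(\sigma_0)=H'(\sigma)$, and in the change step $H'(\sigma)\ge H(\sigma)$ by construction.

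This reversal is exactly what drives the only nontrivial verification, that $H'$ strictly decreases when $G$ changes its mind (and hence stays below $\alpha$). With the correct direction: $G$ changed, so $H(\sigma)<H(\sigma_0)$, hence $H(\sigma)+1\le H(\sigma_0)\le H'(\sigma_0)$; since $H'(\sigma)\le H(\sigma)+1$ by construction, $H'(\sigma)\le H'(\sigma_0)$, and equality is ruled out because the two were chosen of opposite parity. Your closing ``finite-gap'' existence argument is then unnecessary; the whole point of anchoring to $H(\sigma)$ or $H(\sigma)+1$ is that the bound $H'\ge H$ (not $H'\le H$) guarantees there is always room to drop.
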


\begin{proof}
Define $H'(\sigma)$ by induction on the length of $\sigma$ as follows.
Let $H'(\emptyset)=H(\emptyset)$.
If $\sigma\not=\emptyset$,
write $\sigma=\sigma_0\frown n$ for some $n\in\N$ ($\frown$ denotes concatenation).
If $G(\sigma)=G(\sigma_0)$, let $H'(\sigma)=H'(\sigma_0)$.
Otherwise, let $H'(\sigma)$ be either $H(\sigma)$ or $H(\sigma)+1$, whichever has
parity opposite to $H'(\sigma_0)$.

By construction $H'$ has the desired parity properties.
A simple inductive argument shows that ($*$) $\forall\sigma\in\N^{<\N}$,
$H(\sigma)\leq H'(\sigma)<\alpha$.
I claim that for all $f:\N\to\N$ and $n\in\N$, $H'(f\rest (n+1))\leq H'(f\rest n)$,
and if $G(f\rest(n+1))\not=G(f\rest n)$ then $H'(f\rest (n+1))<H'(f\rest n)$.

If $G(f\rest(n+1))=G(f\rest n)$, then by definition $H'(f\rest(n+1))=H'(f\rest n)$
and the claim is trivial.  Now assume $G(f\rest(n+1))\not=G(f\rest n)$.
If $H'(f\rest(n+1))=H(f\rest(n+1))$ then $H'(f\rest(n+1))<H(f\rest n)\leq H'(f\rest n)$
and we are done.  Assume \[H'(f\rest(n+1))\not= H(f\rest(n+1)),\]
which forces that $\mbox{($**$) $H'(f\rest(n+1))=H(f\rest(n+1))+1$}$.
To see that \[H'(f\rest(n+1))<H'(f\rest n),\] assume not ($***$).
By Definition \ref{mindchangedefn},
$H(f\rest (n+1))<H(f\rest n)$, so
\begin{align*}
H(f\rest n) &\geq H(f\rest (n+1))+1 &\mbox{(Basic arithmetic)}\\
&= H'(f\rest(n+1)) &\mbox{(By ($**$))}\\
&\geq H'(f\rest n) &\mbox{(By ($***$))}\\
&\geq H(f\rest n). &\mbox{(By ($*$))}
\end{align*}
Equality holds throughout, and $H'(f\rest (n+1))=H'(f\rest n)$.
Contradiction: we chose $H'(f\rest(n+1))$
with parity opposite to $H'(f\rest n)$.
\end{proof}

\begin{definition}
\label{anticongruentdefn}
For all $G,H$ as in Definition \ref{mindchangedefn},
$f\in\N^\N$, write $G(f)$ for $\lim_{n\to\infty}G(f\rest n)$ (so
$G(f)=\chi_S(f)$) and write $H(f)$ for $\lim_{n\to\infty}H(f\rest n)$.
Write $G\equiv H$ to indicate that $\forall f\in\N^\N$, $G(f)\equiv H(f)$;
write $G\not\equiv H$ to indicate that $\forall f\in\N^\N$, $G(f)\not\equiv H(f)$
(we pronounce $G\not\equiv H$ as ``$G$ is anticongruent to $H$'').
\end{definition}

\begin{lemma}
\label{coheringguessers}
Suppose $G:\N^{<\N}\to\{0,1\}$ and $H:\N^{<\N}\to\alpha$
witness that $S$ is guessable with $<\alpha$ mind changes.
There is an $H':\N^{<\N}\to\alpha$
such that $G,H'$ witness that $S$ is guessable with $<\alpha$ mind changes,
and such that the following hold.
\begin{align*}
\mbox{ If $G(\emptyset)\equiv \alpha$ then $H'\not\equiv G$.}
&{}
&
\mbox{ If $G(\emptyset)\not\equiv \alpha$ then $H'\equiv G$.}
\end{align*}
\end{lemma}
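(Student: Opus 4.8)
The plan is to start from Lemma~\ref{Htechlemma}. Applying it to $G$ and $H$ produces a function — call it $H_1:\N^{<\N}\to\alpha$ — such that $G$ and $H_1$ still witness that $S$ is guessable with $<\alpha$ mind changes, $H_1(\emptyset)=H(\emptyset)$, and for every $f$ and $n$, $H_1(f\rest(n+1))\equiv H_1(f\rest n)$ if and only if $G(f\rest(n+1))=G(f\rest n)$. The purpose of this last property is the following. Fix $f$. Since $G(f\rest n)$ converges to $\chi_S(f)$, the guesser $G$ changes its mind along $f$ only finitely often, say $k$ times; and since $H_1$ is non-increasing along $f$ and the ordinals are well-ordered, $H_1(f\rest n)$ is eventually constant, equal to $H_1(f)$. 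By the parity property the parity of $H_1(f\rest n)$ changes at exactly the $k$ mind changes, so $H_1(f)\equiv H(\emptyset)$ iff $k$ is even; likewise $G(f\rest n)$ flips value at exactly those $k$ steps, so $G(f)=G(\emptyset)$ iff $k$ is even. As $\equiv$ on $\{0,1\}$-valued quantities is just equality, this gives $H_1(f)\equiv H(\emptyset)$ iff $G(f)\equiv G(\emptyset)$, for all $f$. Consequently $H_1\equiv G$ when $H(\emptyset)\equiv G(\emptyset)$ and $H_1\not\equiv G$ when $H(\emptyset)\not\equiv G(\emptyset)$ (and exactly one of these occurs).

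Next I would split on the parity of $H(\emptyset)$ relative to $\alpha$. If $H(\emptyset)\not\equiv\alpha$, take $H'=H_1$: then $H'\equiv G$ iff $H(\emptyset)\equiv G(\emptyset)$, and since $H(\emptyset)\not\equiv\alpha$ this is equivalent to $G(\emptyset)\not\equiv\alpha$ — exactly the behaviour demanded. If $H(\emptyset)\equiv\alpha$, take $H'(\sigma)=H_1(\sigma)+1$ for all $\sigma$; adding $1$ reverses the parity of every ordinal and hence of the eventual value, so $H'(f)\not\equiv H_1(f)$, giving $H'\equiv G$ iff $H_1\not\equiv G$ iff $H(\emptyset)\not\equiv G(\emptyset)$, which (since $H(\emptyset)\equiv\alpha$) is again equivalent to $G(\emptyset)\not\equiv\alpha$. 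It remains to see that $G,H'$ still witness guessability with $<\alpha$ mind changes; this is immediate when $H'=H_1$, and when $H'=H_1+1$ it follows — provided $H'$ maps into $\alpha$ — because adding $1$ preserves non-increasingness along $f$ and sends strict decreases to strict decreases, so conditions~(i) and~(ii) of Definition~\ref{mindchangedefn} are retained.

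The delicate point, which I expect to be the main obstacle in the write-up, is checking that $H_1(\sigma)+1<\alpha$ for all $\sigma$ in the case $H(\emptyset)\equiv\alpha$. Since $H_1$ is non-increasing with $H_1(\emptyset)=H(\emptyset)$, it suffices to prove $H(\emptyset)+1<\alpha$. If $\alpha$ is a limit this is automatic. If $\alpha=\gamma+1$ is a successor, then $\gamma$ has parity opposite to $\alpha$, so $H(\emptyset)\equiv\alpha$ rules out $H(\emptyset)=\gamma$; combined with $H(\emptyset)<\alpha=\gamma+1$ this forces $H(\emptyset)<\gamma$, hence $H(\emptyset)+1\le\gamma<\alpha$, as required. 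This completes the plan; note that the statement of Lemma~\ref{coheringguessers} does not ask for $H'(\emptyset)=H(\emptyset)$, so the shift by $1$ in the second case causes no problem.
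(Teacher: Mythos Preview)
Your proof is correct and follows essentially the same approach as the paper. The only difference is the order of the two steps: the paper first adjusts $H$ at the root (adding $1$ to $H(\emptyset)$ when $H(\emptyset)\equiv\alpha$) and then applies Lemma~\ref{Htechlemma}, whereas you apply Lemma~\ref{Htechlemma} first and then, in the case $H(\emptyset)\equiv\alpha$, add $1$ to every value of $H_1$; both versions hinge on the same parity argument showing that $H(\emptyset)\equiv\alpha$ forces $H(\emptyset)+1<\alpha$.
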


\begin{proof}
I claim that without loss of generality, we may assume
the following ($*$):
\begin{align*}
\mbox{ If $G(\emptyset)\equiv \alpha$ then $H(\emptyset)\not\equiv G(\emptyset)$.}
&{}
&
\mbox{ If $G(\emptyset)\not\equiv \alpha$ then $H(\emptyset)\equiv G(\emptyset)$.}
\end{align*}
To see this,
suppose not: either $G(\emptyset)\equiv\alpha$ and $H(\emptyset)\equiv G(\emptyset)$,
or else $G(\emptyset)\not\equiv\alpha$ and $H(\emptyset)\not\equiv G(\emptyset)$.
In either case, $H(\emptyset)\equiv\alpha$.
If $H(\emptyset)\equiv\alpha$ then $H(\emptyset)+1\not=\alpha$,
and so, since $H(\emptyset)<\alpha$, $H(\emptyset)+1<\alpha$,
meaning we may add $1$ to $H(\emptyset)$ to enforce the assumption.

Having assumed ($*$), we may use Lemma \ref{Htechlemma}
to construct $H':\N^{<\N}\to\alpha$
such that $G,H'$ witness that $S$ is guessable with $<\alpha$ mind changes,
$H'(\emptyset)=H(\emptyset)$, and $H'$ changes parity precisely when $G$ changes
parity.  The latter facts, combined with ($*$), prove the lemma.
\end{proof}

\begin{proposition}
\label{workhorse}
Suppose $G:\N^{<\N}\to\{0,1\}$ and $H:\N^{<\N}\to\alpha+1$ witness that
$S$ is guessable with $<\alpha+1$ mind changes.
If $G(\emptyset)=0$ then $S\in D_\alpha$.
\end{proposition}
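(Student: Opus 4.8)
The plan is to manufacture the open sets $A_\eta$ ($\eta<\alpha$) directly from a suitably adjusted version of the level function $H$, exploiting that $n\mapsto H(f\rest n)$ is non-increasing (hence eventually constant), and that a strict drop of $H$ is forced exactly when $G$ changes its mind. The set $D_\alpha((A_\eta)_{\eta<\alpha})$ asks for the least $\eta$ with $f\in A_\eta$ to exist and have parity opposite to $\alpha$, so I will arrange that this least $\eta$ is precisely $\lim_n H(f\rest n)$.

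First I would normalize $H$. Apply Lemma \ref{coheringguessers} with its ``$\alpha$'' taken to be $\alpha+1$ (our $H$ maps into $\alpha+1$, and $G(\emptyset)=0$): this produces $H':\N^{<\N}\to\alpha+1$ such that $G,H'$ still witness that $S$ is guessable with $<\alpha+1$ mind changes, and moreover, since $0\equiv\alpha+1$ iff $\alpha$ is odd, the lemma gives $H'\not\equiv G$ when $\alpha$ is odd and $H'\equiv G$ when $\alpha$ is even. Write $H'(f)=\lim_n H'(f\rest n)$. Then $H'(f)\le\alpha$ always (the codomain is $\alpha+1$), and --- the one genuinely non-formal observation --- if $H'(f)=\alpha$ then $G(f)=0$: since $(H'(f\rest n))_n$ is non-increasing with all terms $\le\alpha$ and limit $\alpha$, every $H'(f\rest n)$ equals $\alpha$, so $G$ never changes its mind along $f$, whence $G(f\rest n)=G(\emptyset)=0$ for all $n$. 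In particular $f\in S$ forces $H'(f)<\alpha$.

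Now set, for $\eta<\alpha$,
\[
A_\eta=\bigcup\{\,[\sigma]\;:\;\sigma\in\N^{<\N},\ H'(\sigma)\le\eta\,\}.
\]
Each $A_\eta$ is open, the family is increasing in $\eta$, and since $n\mapsto H'(f\rest n)$ is non-increasing and eventually equal to $H'(f)$ one checks that $f\in A_\eta$ iff $H'(f)\le\eta$. Hence $f\in\bigcup_{\eta<\alpha}A_\eta$ iff $H'(f)<\alpha$, and when this holds the least $\eta$ with $f\in A_\eta$ is exactly $H'(f)$, so that
\[
D_\alpha\big((A_\eta)_{\eta<\alpha}\big)=\{\,f\;:\;H'(f)<\alpha\ \text{ and }\ H'(f)\not\equiv\alpha\,\}.
\]
To identify the right-hand side with $S$, use the dichotomy above: if $\alpha$ is even then $H'\equiv G$, so $f\in S\iff G(f)=1\iff H'(f)$ is odd $\iff H'(f)\not\equiv\alpha$, where membership in $S$ already forces $H'(f)<\alpha$; if $\alpha$ is odd then $H'\not\equiv G$, so $f\in S\iff G(f)=1\iff H'(f)$ is even $\iff H'(f)\not\equiv\alpha$, again with $H'(f)<\alpha$. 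Either way $S=D_\alpha((A_\eta)_{\eta<\alpha})$ with every $A_\eta$ open, so $S\in D_\alpha$.

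I expect the main obstacle to be the parity bookkeeping in the normalization step: choosing the application of Lemma \ref{coheringguessers} so that the parity of $H'$ lines up with membership in $S$ in the precise way demanded by Definition \ref{hierarchydefn}, while keeping straight that ``$\equiv$'' means \emph{sameness} of parity whereas $D_\alpha$ wants the \emph{opposite} parity, and that $G(\emptyset)=0$ rather than $1$ enters each case. Once $H'$ is in hand, defining the $A_\eta$ and verifying $S=D_\alpha((A_\eta)_{\eta<\alpha})$ is routine. (If $\alpha=0$ the statement is degenerate --- $H$ maps into $\{0\}$, forcing $G\equiv G(\emptyset)=0$ and $S=\emptyset$ --- so we may assume $\alpha\ge 1$.)
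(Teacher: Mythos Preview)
Your proof is correct and follows essentially the same approach as the paper: both invoke Lemma~\ref{coheringguessers} (applied with its ``$\alpha$'' equal to $\alpha+1$) to normalize the parity relationship between $H$ and $G$, define $A_\eta=\{f:H'(f)\le\eta\}$ (your union-of-basic-opens formulation is equivalent and makes openness explicit), and then verify $S=D_\alpha((A_\eta)_{\eta<\alpha})$ via a parity case analysis---the paper splits on whether $G(\emptyset)\equiv\alpha$, which under $G(\emptyset)=0$ is exactly your even/odd split on $\alpha$. Your treatment is in fact slightly more careful in justifying why $H'(f)=\alpha$ forces $G(f)=0$ and why the $A_\eta$ are open.
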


\begin{proof}
By Lemma \ref{coheringguessers}
we may safely assume the following:
\begin{align*}
\mbox{ If $G(\emptyset)\equiv \alpha+1$ then $H\not\equiv G$.}
&{}
&
\mbox{ If $G(\emptyset)\not\equiv \alpha+1$ then $H\equiv G$.}
\end{align*}
In other words,
\begin{align*}
(*) \mbox{ If $G(\emptyset)\equiv \alpha$ then $H\equiv G$.}
&{}
&
(**) \mbox{ If $G(\emptyset)\not\equiv \alpha$ then $H\not\equiv G$.}
\end{align*}

For each $\eta<\alpha$, let
\begin{align*}
A_\eta &= \{f\in\N^\N\,:\,H(f)\leq\eta\}.
&\mbox{($H(f)$ as in Definition \ref{anticongruentdefn})}
\end{align*}
I claim $S=D_\alpha((A_\eta)_{\eta<\alpha})$, which will prove the proposition
since each $A_\eta$ is clearly open.

Suppose $f\in S$, I will show $f\in D_\alpha((A_\eta)_{\eta<\alpha})$.
Since $f\in S$, $H(f)\not=\alpha$, because if $H(f)$ were $=\alpha$,
this would imply that $G$ never changes its mind on $f$,
forcing $\lim_{n\to\infty} G(f\rest n)=\lim_{n\to\infty} G(\emptyset)=0$,
contradicting the fact that $G$ guesses $S$.

Since $H(f)\not=\alpha$, $H(f)<\alpha$.
It follows that for $\eta=H(f)$ we have $f\in A_\eta$ and $\eta$ is minimal with this property.

\item
Case 1: $G(\emptyset)\equiv \alpha$.  By ($*$), $H\equiv G$.
Since $f\in S$, $\lim_{n\to\infty}G(f\rest n)=1$,
so $\eta=\lim_{n\to\infty}H(f\rest n)\equiv 1$.  Since $\alpha\equiv G(\emptyset)=0$,
this shows $\eta\not\equiv \alpha$, putting $f\in D_\alpha((A_\eta)_{\eta<\alpha})$.

\item
Case 2: $G(\emptyset)\not\equiv\alpha$.  By ($**$), $H\not\equiv G$.
Since $f\in S$, $\lim_{n\to\infty}G(f\rest n)=1$,
so $\eta=\lim_{n\to\infty}H(f\rest n)\equiv 0$.
Since $\alpha\not\equiv G(\emptyset)=0$, this shows $\eta\not\equiv\alpha$,
so $f\in D_\alpha((A_\eta)_{\eta<\alpha})$.

\item
Conversely, suppose $f\in D_\alpha((A_\eta)_{\eta<\alpha})$, I will show $f\in S$.
Let $\eta$ be minimal such that $f\in A_\eta$
(by definition of $A_\eta$, $\eta=H(f)$).  By definition of $D_\alpha((A_\eta)_{\eta<\alpha})$,
$\eta\not\equiv\alpha$.

\item
Case 1: $G(\emptyset)\equiv\alpha$.  By ($*$), $H\equiv G$.
Since $\lim_{n\to\infty}H(f\rest n)=H(f)=\eta\not\equiv \alpha\equiv G(\emptyset)=0$,
we see $\lim_{n\to\infty}H(f\rest n)=1$.  Since $H\equiv G$,
$\lim_{n\to\infty}G(f\rest n)=1$, forcing $f\in S$ since $G$ guesses $S$.

\item
Case 2: $G(\emptyset)\not\equiv\alpha$.  By ($**$), $H\not\equiv G$.
Since
\[\lim_{n\to\infty}H(f\rest n)=H(f)=\eta\not\equiv\alpha\not\equiv G(\emptyset)=0,\]
we see $\lim_{n\to\infty}H(f\rest n)=0$.
Since $H\not\equiv G$, $\lim_{n\to\infty}G(f\rest n)=1$, again showing $f\in S$.
\end{proof}

\begin{corollary}
\label{rightarrowcorollary}
If $S$ is guessable with $<\alpha+1$ mind changes, then $S\in D_{\alpha}$
or $S^c\in D_{\alpha}$.
\end{corollary}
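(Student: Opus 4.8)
The plan is to obtain this as an immediate consequence of Proposition \ref{workhorse}, splitting into cases according to the value at the empty sequence of a witnessing guesser. So suppose $S$ is guessable with $<\alpha+1$ mind changes, witnessed by $G:\N^{<\N}\to\{0,1\}$ and $H:\N^{<\N}\to\alpha+1$ as in Definition \ref{mindchangedefn}.

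If $G(\emptyset)=0$, then Proposition \ref{workhorse} applies verbatim and gives $S\in D_\alpha$, so we are done. If $G(\emptyset)=1$, I would instead pass to the complement. Define $G':\N^{<\N}\to\{0,1\}$ by $G'(\sigma)=1-G(\sigma)$, and keep $H$ unchanged. Then $\lim_{n\to\infty}G'(f\rest n)=1-\chi_S(f)=\chi_{S^c}(f)$ for every $f$, so $G'$ is an $S^c$-guesser; and since $G$ is two-valued, $G'$ changes its mind on $f\rest(n+1)$ precisely when $G$ does, so conditions (i) and (ii) of Definition \ref{mindchangedefn} transfer unchanged from $(G,H)$ to $(G',H)$. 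Thus $G',H$ witness that $S^c$ is guessable with $<\alpha+1$ mind changes, and $G'(\emptyset)=1-G(\emptyset)=0$, so Proposition \ref{workhorse} (applied to $S^c$ in place of $S$) yields $S^c\in D_\alpha$.

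Either way, $S\in D_\alpha$ or $S^c\in D_\alpha$, which is the claim. I do not expect any real obstacle here: the only thing needing verification is the routine observation that negating a guesser preserves the mind-change count while flipping its value at $\emptyset$, after which Proposition \ref{workhorse} does all the work. (Combined with Corollary \ref{dalphaisguessablecorollary}, this also completes the proof of Theorem \ref{hierarchymainthm}.)
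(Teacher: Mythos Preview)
Your proof is correct and follows essentially the same approach as the paper: split on $G(\emptyset)$, apply Proposition~\ref{workhorse} directly when $G(\emptyset)=0$, and otherwise replace $G$ by $1-G$ (keeping $H$) to reduce to the $S^c$ case. The paper's version is slightly terser but the argument is identical.
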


\begin{proof}
Let $G,H$ witness that $S$ is guessable with $<\alpha+1$ mind changes.
If $G(\emptyset)=0$ then $S\in D_{\alpha}$ by Proposition \ref{workhorse}.
If not, then $(1-G),H$ witness that $S^c$ is guessable with $<\alpha+1$
mind changes, and $(1-G)(\emptyset)=0$, so $S^c\in D_{\alpha}$ by Proposition \ref{workhorse}.
\end{proof}

Combining Corollaries \ref{dalphaisguessablecorollary} and \ref{rightarrowcorollary}
proves Theorem \ref{hierarchymainthm}.

%
%

\section{Higher-order Guessability}
\label{higherordersection}

In this section we introduce a notion that generalizes guessability to provide
a characterization for $\bm{\Delta}^0_{\mu+1}$
($1\leq\mu<\omega_1$).  We will show that $S\in \bm{\Delta}^0_{\mu+1}$
if and only if $S$ is $\mu$th-order guessable.
Throughout this section, $\mu$ denotes an ordinal in $[1,\omega_1)$.

\begin{definition}
\label{guessablebaseddefn}
Let $\mathscr{S}=(S_0,S_1,\ldots)$ be a countably infinite
tuple of subsets $S_i\subseteq\N^\N$.
\begin{enumerate}
\item[(i)]
For every $f\in \N^\N$,
write $\mathscr{S}(f)$ for the sequence
$(\chi_{S_0}(f),\chi_{S_1}(f),\ldots)\in\{0,1\}^\N$.
\item[(ii)]
We say that $S$ is \emph{guessable based on $\mathscr{S}$}
if there is a function \[G:\{0,1\}^{<\N}\to\{0,1\}\]
(called an \emph{$S$-guesser based on $\mathscr{S}$}) such that
$\forall f\in\N^\N$,
\[
\lim_{n\to\infty} G(\mathscr{S}(f)\rest n) = \chi_S(f).
\]
\end{enumerate}
\end{definition}

Game theoretically, we envision a game where $I$ (the sequence chooser) has zero information
and $II$ (the guesser) has possibly \emph{better-than-perfect} information:
$II$ is allowed to ask (once per turn) whether $I$'s sequence lies in various $S_i$.
For each $S_i$, player $I$'s act
(by answering the question) of committing to play a sequence in $S_i$ or in $S_i^c$
is similar to the act (described in \cite{martin}, p.~366) of choosing a $I$-imposed subgame.

\begin{example}
\label{higherorderexample}
If $\mathscr{S}$ enumerates the sets of the form $\{f\in\N^\N\,:\,f(i)=j\}$, $i,j\in\N$
then it is not hard to show that $S$ is guessable (in the sense
of Definition \ref{guessabledefn}) if and only if $S$ is guessable based on $\mathscr{S}$.
\end{example}

\begin{definition}
\label{muthorderguessabledefn}
We say $S$ is \emph{$\mu$th-order guessable} if there
is some $\mathscr S=(S_0,S_1,\ldots)$ as in Definition \ref{guessablebaseddefn}
such that the following hold.
\begin{enumerate}
\item[(i)] $S$ is guessable based on $\mathscr S$.
\item[(ii)] $\forall i$, $S_i\in\bm{\Delta}^0_{\mu_i+1}$ for some $\mu_i<\mu$.
\end{enumerate}
\end{definition}

\begin{theorem}
\label{higherorderguessabilitycharacterization}
$S$ is $\mu$th-order guessable if and only if $S\in\bm{\Delta}^0_{\mu+1}$.
\end{theorem}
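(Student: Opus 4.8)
The plan is to prove the two directions separately, leaning on Theorem~\ref{delta2isguessable} (Wadge's characterization of $\bm{\Delta}^0_2$) relativized along an increasing chain of pointclasses.

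\medskip\noindent\textbf{($\Rightarrow$).} Suppose $S$ is $\mu$th-order guessable, witnessed by $\mathscr S=(S_0,S_1,\ldots)$ with each $S_i\in\bm{\Delta}^0_{\mu_i+1}$, $\mu_i<\mu$, and an $S$-guesser $G:\{0,1\}^{<\N}\to\{0,1\}$ based on $\mathscr S$. The key observation is that the map $f\mapsto\mathscr S(f)$, as a function $\N^\N\to\{0,1\}^\N$, has the property that the preimage of each basic clopen set $[\tau]$ ($\tau\in\{0,1\}^{<\N}$) is a finite Boolean combination of the $S_i$'s, hence lies in $\bm{\Delta}^0_{\nu}$ for some $\nu<\mu+1$ (take $\nu=\sup_{i<\mathrm{length}(\tau)}(\mu_i+1)$ if $\mu$ is a limit, or argue directly for successor $\mu$; in any case $\bm{\Delta}^0_\nu\subseteq\bm{\Sigma}^0_{\mu+1}\cap\bm{\Pi}^0_{\mu+1}$). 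Now write $S=\{f:\lim_n G(\mathscr S(f)\rest n)=1\}$. Since $G$ takes values in $\{0,1\}$, ``$\lim_n G(\mathscr S(f)\rest n)=1$'' says: for some $N$, for all $n\geq N$, $G(\mathscr S(f)\rest n)=1$; i.e. $S=\bigcup_N\bigcap_{n\geq N}\{f:\mathscr S(f)\rest n\in G^{-1}(1)\}$. Each inner set is a preimage of a clopen set, hence in $\bm{\Delta}^0_{\mu+1}$'s ambient algebra; the countable union of countable intersections keeps us inside $\bm{\Sigma}^0_{\mu+1}$. Running the symmetric computation on $S^c$ (using $1-G$, which also has values in $\{0,1\}$) puts $S\in\bm{\Pi}^0_{\mu+1}$ as well, so $S\in\bm{\Delta}^0_{\mu+1}$.

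\medskip\noindent\textbf{($\Leftarrow$).} Suppose $S\in\bm{\Delta}^0_{\mu+1}$. Then $S$ is both $\bm{\Sigma}^0_{\mu+1}$ and $\bm{\Pi}^0_{\mu+1}$, so we may write $S=\bigcup_i P_i=\bigcap_i Q_i$ with $P_i\in\bm{\Pi}^0_\mu$ and $Q_i\in\bm{\Sigma}^0_\mu$. The plan is to let $\mathscr S$ enumerate all the $P_i$ together with all the $Q_i$ (and, if desired, padding sets); each of these is in $\bm{\Sigma}^0_\mu\cup\bm{\Pi}^0_\mu\subseteq\bm{\Delta}^0_{\mu+1}$, but we need each to lie in $\bm{\Delta}^0_{\mu_i+1}$ for some $\mu_i<\mu$. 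This is immediate when $\mu$ is a successor $\mu=\nu+1$: then $\bm{\Sigma}^0_\mu$ and $\bm{\Pi}^0_\mu$ are both contained in $\bm{\Delta}^0_{\nu+1}$... more carefully, a $\bm{\Pi}^0_\mu$ set is a countable intersection of $\bm{\Sigma}^0_\nu$ sets, which themselves lie in $\bm{\Delta}^0_{\nu+1}$; so we instead enumerate those constituent $\bm{\Sigma}^0_\nu\subseteq\bm{\Delta}^0_{\nu+1}$ sets. When $\mu$ is a limit, every $\bm{\Sigma}^0_\mu$ set is already $\bigcup_i R_i$ with $R_i\in\bm{\Pi}^0_{\nu_i}$, $\nu_i<\mu$, so $R_i\in\bm{\Delta}^0_{\nu_i+1}$ with $\nu_i<\mu$ --- exactly what condition (ii) needs. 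Having fixed $\mathscr S$ so that $S=\bigcup_i S_{a_i}=\bigcap_i S_{b_i}$ for suitable index sequences, define $G:\{0,1\}^{<\N}\to\{0,1\}$ to output $1$ on $\sigma$ if $\sigma$ already ``witnesses'' membership in some $S_{a_i}$ (i.e.\ the coordinate recording $\chi_{S_{a_i}}$ is $1$ and has stabilized, in the naive sense that its position is $<\mathrm{length}(\sigma)$) unless a later coordinate recording some $\chi_{S_{b_j}}=0$ overrides it; more simply, $G(\sigma)$ can be defined as the value of the last coordinate of $\sigma$ among positions recording a $P_i$ or $Q_j$ that has ``decided'' --- one checks that $\lim_n G(\mathscr S(f)\rest n)=\chi_S(f)$ because if $f\in S$ some $\chi_{P_i}(f)=1$ stabilizes and all $\chi_{Q_j}(f)=1$, while if $f\notin S$ some $\chi_{Q_j}(f)=0$ stabilizes. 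The cleanest implementation: interleave the $P$-list and $Q$-list, let $G(\sigma)=1$ iff the first index $k<\mathrm{length}(\sigma)$ at which $\sigma$ "commits" (the coordinate is $1$ for a $P$-slot or $0$ for a $Q$-slot) is a $P$-slot; diverging coordinates never commit, so the limit reads off whether $f$ landed in some $P_i$ before landing in some $Q_j^c$ --- and since $\bigcup P_i=\bigcap Q_j=S$, this is precisely $\chi_S(f)$.

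\medskip\noindent\textbf{Main obstacle.} The real work is bookkeeping in the ($\Leftarrow$) direction: choosing the enumeration $\mathscr S$ so that condition (ii) is met with the \emph{strict} inequality $\mu_i<\mu$ (forcing us, in the successor case, to unfold one layer of the $\bm{\Sigma}/\bm{\Pi}$ definition rather than naively taking the $\bm{\Pi}^0_\mu$ sets themselves), and then designing $G$ so that its pointwise limit correctly computes $\chi_S$ using only the information in $\mathscr S(f)$ --- in particular handling the case where a ``commitment'' coordinate is $1$ at a $P$-slot but a later $Q$-slot coordinate is $0$ (which cannot actually happen if $f\in S$, but $G$ only sees finite segments, so one must argue the limit is still correct). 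A secondary subtlety is verifying closure of the relevant pointclasses under the countable-union-of-countable-intersections appearing in the ($\Rightarrow$) direction when $\mu$ is a limit ordinal; this is standard ($\bm{\Sigma}^0_{\mu+1}$ is closed under countable unions and $\bm{\Pi}^0_{\mu+1}$ under countable intersections, and a countable intersection of $\bm{\Delta}^0_{\mu+1}$ sets is $\bm{\Pi}^0_{\mu+1}\subseteq\bm{\Sigma}^0_{\mu+2}$, but one needs it $\subseteq\bm{\Sigma}^0_{\mu+1}$ --- which holds because the inner sets are actually clopen-preimages living low enough in the hierarchy). I would also double-check Example~\ref{higherorderexample} as the $\mu=1$ sanity case: there each $S_i=\{f:f(i)=j\}$ is clopen, i.e.\ $\bm{\Delta}^0_1\subseteq\bm{\Delta}^0_{0+1}$ with $0<1$, and the theorem reduces to Theorem~\ref{delta2isguessable}.
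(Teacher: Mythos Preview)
Your $(\Rightarrow)$ direction is fine and matches the paper: both of you observe that $\{f:G(\mathscr S(f)\rest m)=1\}$ is a finite Boolean combination of $S_0,\ldots,S_{m-1}$, hence in $\bm{\Delta}^0_{\mu'+1}$ for some $\mu'<\mu$, and then read off the $\bm{\Sigma}^0_{\mu+1}$ and $\bm{\Pi}^0_{\mu+1}$ representations. The paper finishes by citing Kuratowski's $\liminf/\limsup$ characterization (Lemma~\ref{kuratowskilemma}); you argue the pointclass closure directly, which also works.

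The $(\Leftarrow)$ direction has a real gap. You correctly diagnose that enumerating the $\bm{\Pi}^0_\mu$ sets $P_i$ and $\bm{\Sigma}^0_\mu$ sets $Q_j$ does not meet condition~(ii), and you propose unfolding them one level. But after unfolding you have $S=\bigcup_i\bigcap_k R_{i,k}=\bigcap_j\bigcup_l T_{j,l}$, and the enumerated sets $R_{i,k},T_{j,l}$ no longer satisfy ``$S=\bigcup_i S_{a_i}=\bigcap_i S_{b_i}$.'' Your ``cleanest implementation'' of $G$ (first-commit on $P$-slots vs.\ $Q$-slots) presupposes exactly that simple union/intersection structure, so it does not apply to the unfolded enumeration. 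To make your route work you would need a guesser that reconstructs $\chi_{P_i}(f)$ from the infinite family $(\chi_{R_{i,k}}(f))_k$ while seeing only finitely many coordinates at a time; this is possible but is nowhere near what you wrote.

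The paper avoids all of this by invoking Lemma~\ref{kuratowskilemma} in the other direction: Kuratowski's result directly produces a single sequence $(A_m)$ with each $A_m\in\bm{\Delta}^0_{\mu_m+1}$, $\mu_m<\mu$, such that $S=\bigcup_n\bigcap_{m\geq n}A_m=\bigcap_n\bigcup_{m\geq n}A_m$. With $\mathscr S=(A_0,A_1,\ldots)$ the guesser is then simply $G(a_0,\ldots,a_m)=a_m$, and the convergence $G(\mathscr S(f)\rest n)\to\chi_S(f)$ is immediate from the $\liminf=\limsup$ condition. The lemma is doing precisely the ``strict $\mu_i<\mu$'' bookkeeping that tripped you up.
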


In order to prove Theorem \ref{higherorderguessabilitycharacterization} we will assume
the following result, which is a specialization and rephrasing of Exercise 22.17 of \cite{kechris} (pp.~172--173,
attributed to Kuratowski).

\begin{lemma}
\label{kuratowskilemma}
The following are equivalent.
\begin{enumerate}
\item[(i)] $S\in\bm{\Delta}^0_{\mu+1}$.
\item[(ii)] There is a sequence $(A_i)_{i\in\N}$, each 
$A_i\in\bm{\Delta}^0_{\mu_i+1}$ for some $\mu_i<\mu$,
such that
\[
S=\bigcup_n\bigcap_{m\geq n}A_m=\bigcap_n\bigcup_{m\geq n}A_m.
\]
\end{enumerate}
\end{lemma}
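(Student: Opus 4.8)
The plan is to prove the two implications separately, the direction (ii)$\Rightarrow$(i) being routine and (i)$\Rightarrow$(ii) carrying the real content. For (ii)$\Rightarrow$(i) I would argue purely from the closure properties of the Borel pointclasses. Given $(A_i)$ with each $A_i\in\bm{\Delta}^0_{\mu_i+1}$ and $\mu_i<\mu$, note $\mu_i+1\le\mu$, so every $A_i$ lies in $\bm{\Delta}^0_\mu\subseteq\bm{\Sigma}^0_\mu\cap\bm{\Pi}^0_\mu$. Writing $S=\bigcap_n\bigcup_{m\ge n}A_m$, each inner union is a countable union of $\bm{\Sigma}^0_\mu$ sets, hence in $\bm{\Sigma}^0_\mu\subseteq\bm{\Pi}^0_{\mu+1}$; a countable intersection of $\bm{\Pi}^0_{\mu+1}$ sets is $\bm{\Pi}^0_{\mu+1}$, so $S\in\bm{\Pi}^0_{\mu+1}$. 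Dually, from $S=\bigcup_n\bigcap_{m\ge n}A_m$ each inner intersection is a countable intersection of $\bm{\Pi}^0_\mu$ sets, hence in $\bm{\Pi}^0_\mu\subseteq\bm{\Sigma}^0_{\mu+1}$, and a countable union of these stays in $\bm{\Sigma}^0_{\mu+1}$, so $S\in\bm{\Sigma}^0_{\mu+1}$. Thus $S\in\bm{\Delta}^0_{\mu+1}$.

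For (i)$\Rightarrow$(ii) I would use that $S\in\bm{\Delta}^0_{\mu+1}$ gives \emph{both} $S\in\bm{\Sigma}^0_{\mu+1}$ and $S^c\in\bm{\Sigma}^0_{\mu+1}$. From these I extract increasing sequences $P_0\subseteq P_1\subseteq\cdots$ with $\bigcup_n P_n=S$ and $Q_0\subseteq Q_1\subseteq\cdots$ with $\bigcup_n Q_n=S^c$, each $P_n,Q_n\in\bm{\Pi}^0_\mu$ (finite unions of the $\bm{\Pi}^0_{<\mu+1}$ pieces witnessing $\bm{\Sigma}^0_{\mu+1}$-membership). Crucially $P_n\cap Q_{n'}=\emptyset$ for all $n,n'$, and every $x$ has a finite \emph{least witness} $w(x)=\min\{n:x\in P_n\cup Q_n\}$, with $x\in S$ iff $x\in P_{w(x)}$. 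I then push each $\bm{\Pi}^0_\mu$ set down into the allowed class $\bm\Gamma:=\bigcup_{\mu'<\mu}\bm{\Delta}^0_{\mu'+1}$ (which is $\bm{\Delta}^0_\mu$ when $\mu$ is a successor and $\bigcup_{\nu<\mu}\bm{\Delta}^0_\nu$ when $\mu$ is a limit, and is in either case closed under finite Boolean operations): write $P_n=\bigcap_k P_n^k$ and $Q_n=\bigcap_k Q_n^k$ as \emph{decreasing} intersections of $\bm\Gamma$-sets, which exists since $\bm{\Pi}^0_\mu$ sets are countable intersections of sets from $\bm{\Sigma}^0_{<\mu}\subseteq\bm\Gamma$. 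These over-approximations enjoy a permanence property: decreasing in $k$, so once $x$ leaves $P_n^k$ it never returns; hence $x\in P_n$ iff $x\in P_n^k$ for all $k$, while $x\notin P_n$ forces $x\notin P_n^k$ for all large $k$ (and likewise for $Q_n$).

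The heart of the argument is a single-sequence \emph{least-witness-at-the-current-stage} construction that tames the resulting double limit. For each $m$ I set
\[
A_m=\bigcup_{n\le m}\Big(P_n^m\cap\bigcap_{j<n}\big((P_j^m)^c\cap(Q_j^m)^c\big)\Big),
\]
so that $x\in A_m$ exactly when the least $n\le m$ placing $x$ into $P_n^m\cup Q_n^m$ in fact places $x$ into $P_n^m$. Each $A_m$ is a finite Boolean combination of $\bm\Gamma$-sets, hence lies in $\bm\Gamma$, so $A_m\in\bm{\Delta}^0_{\mu_m+1}$ for some $\mu_m<\mu$ as required. I would then verify $\chi_{A_m}\to\chi_S$ pointwise, equivalently $\bigcup_n\bigcap_{m\ge n}A_m=\bigcap_n\bigcup_{m\ge n}A_m=S$, by fixing $x$ with least witness $w=w(x)$: for each of the finitely many $n<w$, permanence drives $x$ out of both $P_n^m$ and $Q_n^m$ from some stage onward, while at $n=w$ the correct side's over-approximation retains $x$ permanently; hence for all large $m$ the least qualifying index is exactly $w$, and the resulting verdict $x\in A_m$ holds iff $x\in S$.

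The main obstacle I anticipate is precisely this convergence verification: a naive diagonalization fails because the stage at which $x$ is correctly classified by $P_n^\bullet$ or $Q_n^\bullet$ depends on $n$ and need not be uniform, so one cannot simply march $n$ and $k$ to infinity together. The construction sidesteps this by consulting only the least witness index, of which there are finitely many candidates below $w(x)$ to eliminate; the disjointness $P_n\cap Q_{n'}=\emptyset$ (which is exactly where two-sided $\bm{\Delta}^0_{\mu+1}$-membership is essential) makes the least witness unambiguous, and permanence of the decreasing over-approximations makes the elimination irreversible. Confirming that finite-stage misclassifications occur only finitely often for each $x$, and carrying out the $\bm\Gamma$-membership bookkeeping uniformly in $m$, are the routine but necessary remaining steps.
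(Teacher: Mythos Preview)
The paper does not actually prove this lemma: immediately before stating it, the author writes ``we will assume the following result, which is a specialization and rephrasing of Exercise 22.17 of \cite{kechris} \ldots\ (attributed to Kuratowski),'' and no proof is supplied. So there is nothing in the paper to compare your argument against; you have simply filled in what the paper outsources.

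Your argument is correct and is essentially the standard Kuratowski proof. The direction (ii)$\Rightarrow$(i) is routine as you say. For (i)$\Rightarrow$(ii), the ``least-witness-at-the-current-stage'' definition of $A_m$ is exactly the right idea, and your convergence analysis is sound: fixing $x$ with true least witness $w=w(x)$, only the finitely many indices $j<w$ can cause misclassification, the decreasing over-approximations drive $x$ out of each $P_j^m,Q_j^m$ from some finite stage onward, and at $n=w$ the correct side retains $x$ permanently while (when $x\in S^c$) the wrong side $P_w^m$ eventually also loses $x$ since $x\notin P_w$. Hence $\chi_{A_m}(x)\to\chi_S(x)$, giving both the $\liminf$ and $\limsup$ equalities. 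The $\bm\Gamma$-bookkeeping is fine since $\bm\Gamma=\bigcup_{\mu'<\mu}\bm{\Delta}^0_{\mu'+1}$ is a Boolean algebra and each $A_m$ is a finite Boolean combination of $\bm\Gamma$-sets.

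One small patch: your justification that ``$\bm{\Pi}^0_\mu$ sets are countable intersections of sets from $\bm{\Sigma}^0_{<\mu}\subseteq\bm\Gamma$'' is literally vacuous at $\mu=1$, since the hierarchy starts at level $1$. The conclusion you need there---that every closed subset of $\N^\N$ is a decreasing countable intersection of clopen (hence $\bm{\Delta}^0_1=\bm\Gamma$) sets---holds because $\N^\N$ is zero-dimensional; just say so explicitly for that base case.
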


\begin{proof}[Proof of Theorem \ref{higherorderguessabilitycharacterization}]
\item
($\Rightarrow$)
Let $\mathscr S=(S_0,S_1,\ldots)$ and $G$ witness that $S$ is $\mu$th-order guessable
(so each $S_i\in\bm{\Delta}^0_{\mu_i+1}$ for some $\mu_i<\mu$).
For all $a\in\{0,1\}$ and $X\subseteq\N^\N$, define
\[
X^a = \left\{\begin{array}{lr}\mbox{$X$,} & \mbox{ if $a=1$;}\\ \mbox{$\N^\N\backslash X$,} & \mbox{ if $a=0$.}\end{array}\right.
\]
For notational convenience, we will write ``$G(\vec{a})=1$'' as an abbreviation for
``$0\leq a_0,\ldots,a_{m-1}\leq 1$ and $G(a_0,\ldots,a_{m-1})=1$,'' provided $m$ is clear from context.
Observe that for all $f\in\N^\N$ and $m\in\N$, $G(\mathscr{S}(f)\rest m)=1$ if and only if
\[
f\in \bigcup_{G(\vec{a})=1}\bigcap_{j=0}^{m-1}S^{a_j}_j.
\]
Now, given $f:\N\to\N$, $f\in S$ if and only if $G(\mathscr S(f)\rest n)\to 1$,
which is true if and only if $\exists n\forall m\geq n$, $G(\mathscr S(f)\rest m)=1$.
Thus
\begin{align*}
f\in S &\mbox{ iff } \exists n\forall m\geq n,\,G(\mathscr S(f)\rest m)=1\\
&\mbox{ iff } \exists n\forall m\geq n,\, f\in \bigcup_{G(\vec{a})=1}\bigcap_{j=0}^{m-1} S^{a_j}_j\\
&\mbox{ iff } f\in \bigcup_n \bigcap_{m\geq n}
\bigcup_{G(\vec{a})=1}\bigcap_{j=0}^{m-1} S^{a_j}_j.
\end{align*}
So
\[
S = \bigcup_n \bigcap_{m\geq n} \bigcup_{G(\vec{a})=1}\bigcap_{j=0}^{m-1} S^{a_j}_j.
\]
At the same time, since $G(\mathscr{S}(f)\rest m)\to 0$ whenever $f\not\in S$,
we see $f\in S$ if and only if $\forall n\exists m\geq n$ such that $G(\mathscr S(f)\rest m)=1$.
Thus by similar reasoning to the above,
\[
S = \bigcap_n \bigcup_{m\geq n} \bigcup_{G(\vec{a})=1}\bigcap_{j=0}^{m-1} S^{a_j}_j.
\]
For each $m$, $\bigcup_{G(\vec{a})=1}\bigcap_{j=0}^{m-1} S^{a_j}_j$ is a finite
union of finite intersections of sets in $\bm{\Delta}^0_{\mu'+1}$
for various $\mu'<\mu$, thus $\bigcup_{G(\vec{a})=1}\bigcap_{j=0}^{m-1} S^{a_j}_j$
itself is in $\bm{\Delta}^0_{\mu_m+1}$ for some $\mu_m<\mu$.
Letting $A_m=\bigcup_{G(\vec{a})=1}\bigcap_{j=0}^{m-1} S^{a_j}_j$, Lemma \ref{kuratowskilemma}
says $S\in\bm{\Delta}^0_{\mu+1}$.

\item
($\Leftarrow$)
Assume $S\in\bm{\Delta}^0_{\mu+1}$.
By Lemma \ref{kuratowskilemma}, there are $(A_i)_{i\in\N}$,
each $A_i\in\bm{\Delta}^0_{\mu_i+1}$ for some $\mu_i<\mu$, such that
\begin{align*}
S &= \bigcup_n\bigcap_{m\geq n}A_m = \bigcap_n\bigcup_{m\geq n}A_m. &\mbox{($*$)}
\end{align*}
I claim that $S$ is guessable based on $\mathcal S=(A_0,A_1,\ldots)$.
Define $G:\{0,1\}^{<\N}\to\{0,1\}$ by $G(a_0,\ldots,a_m)=a_m$, I will show that
$G$ is an $S$-guesser based on $\mathcal{S}$.

Suppose $f\in S$.  By ($*$), $\exists n$ s.t.~$\forall m\geq n$, $f\in A_m$ and thus
$\chi_{A_m}(f)=1$.
For all $m\geq n$,
\begin{align*}
G(\mathcal{S}(f)\rest (m+1)) &= G(\chi_{A_0}(f),\ldots,\chi_{A_m}(f))\\
&= \chi_{A_m}(f)\\
&= 1,
\end{align*}
so $\lim_{n\to\infty} G(\mathcal{S}(f)\rest n)=1$.
A similar argument shows that if $f\not\in S$ then $\lim_{n\to\infty} G(\mathcal{S}(f)\rest n)=0$.
\end{proof}

Combining Theorems \ref{delta2isguessable} and \ref{higherorderguessabilitycharacterization},
we see that $S$ is guessable if and only if $S$ is $1$st-order guessable.
It is also not difficult to give a direct proof of this equivalence,
and having done so, Theorem \ref{higherorderguessabilitycharacterization}
provides yet another proof of Theorem \ref{delta2isguessable}.

\begin{acks}
We acknowledge Tim Carlson, Chris Miller, Dasmen Teh, and Erik Walsberg for
many helpful questions and suggestions.  We are gratetful to a referee of an
earlier manuscript for making us aware of William Wadge's dissertation.
\end{acks}


\begin{thebibliography}{9}
\expandafter\ifx\csname natexlab\endcsname\relax\def\natexlab#1{#1}\fi
\def\docolon{:}
\def\eatcomma#1{}
\def\onlyone#1{\gdef\oneletter{#1}}
\def\sphref#1#2{{\let\#=\docolon\xdef\one{#1}}\href{\one}{#2}}
\def\zhref#1,#2{{\let\#=\docolon\xdef\one{#1}}\href{\one}{#2}}
\expandafter\ifx\csname url\endcsname\relax
  \def\url#1{{\tt #1}}\fi
\newcommand{\enquote}[2]{``#1,''}


\bibitem[Alexander(2011)]{alexander2011}
Alexander, S.,
\newblock \enquote{On Guessing Whether a Sequence has a Certain Property},
\newblock {\em Journal of Integer Sequences} vol.~14 (2011),
 12 pp.

\bibitem[Allouche(1996)]{allouche}
Allouche, J.,
\newblock \enquote{Note on the constructible sets of a topological space},
\newblock {\em Annals of the New York Academy of Science} vol.~806 (1996),
pp.~1--10.

\bibitem[Dougherty and Miller(2001)]{miller}
Dougherty, R. \unskip, and C.~Miller,
\newblock \enquote{Definable {B}oolean combinations of open sets are
{B}oolean combinations of open definable sets},
\newblock {\em Illinois Journal of Mathematics} vol.~45 (2001),
pp.~1347--1350.

\bibitem[Santiago Figueira, Denis Hirschfeldt, Joseph Miller,
Keng Meng Ng, and Andr\'e Nies(2013)]{figueira}
Figueira, S., Hirschfeldt, D., Miller, J., Ng, K., and A.~Nies,
\newblock \enquote{Counting the changes of random $\Delta^0_2$ sets},
to appear in {\em Journal of Logic and Computation},
published online 2013, doi: 10.1093/logcom/exs083.


\bibitem[Alexander Kechris(1995)]{kechris}
Kechris, A.,
\newblock \enquote{Classical Descriptive Set Theory},
\newblock Springer-Verlag, 1995.

\bibitem[Donald Martin(1975)]{martin}
Martin, D.,
\newblock \enquote{Borel determinacy},
\newblock {\em Annals of Mathematics} vol.~102 (1975),
pp.~363--371.

\bibitem[Andr\'e Nies(preprint)]{nies}
Nies, A.,
\newblock \enquote{Calibrating the complexity of $\Delta^0_2$
sets via their changes},
preprint, arXiv: \url{http://arxiv.org/abs/1302.0454}.

\bibitem[Boaz Tsaban and Lyubomyr Zdomskyy(2008)]{tsaban}
Tsaban, B., and L.~Zdomskyy,
\newblock \enquote{Combinatorial Images of Sets of Reals and Semifilter Trichotomy},
\newblock {\em Journal of Symbolic Logic} vol.~73 (2008), pp.~1278--1288.

\bibitem[William Wadge(1983)]{wadge}
Wadge, W.,
\newblock \enquote{Reducibility and Determinateness on the Baire Space},
PhD dissertation, UC Berkeley (1983).

\end{thebibliography}
\end{document}